\theoremstyle{definition}
\newtheorem{definition}{Definition}
\newtheorem{example}[definition]{Example}
\newtheorem{remark}[definition]{Remark}
\theoremstyle{plain}
\newtheorem{lemma}[definition]{Lemma}
\newtheorem{proposition}[definition]{Proposition}
\newtheorem{theorem}[definition]{Theorem}
\newtheorem{corollary}[definition]{Corollary}
\newcommand{\nn}{\!\!\!\!\!\!}
\begin{document}

\title{Malcev dialgebras}

\author[Bremner]{Murray R. Bremner}

\address{Department of Mathematics and Statistics,
University of Saskatchewan, Canada}

\email{bremner@math.usask.ca}

\author[Peresi]{Luiz A. Peresi}

\address{Department of Mathematics,
University of S\~ao Paulo, Brazil}

\email{peresi@ime.usp.br}

\author[S\'anchez-Ortega]{Juana S\'anchez-Ortega}

\address{Department of Algebra, Geometry and Topology, University of M\'alaga, Spain}

\email{jsanchezo@uma.es}

\begin{abstract}
We apply Kolesnikov's algorithm to obtain a variety of nonassociative algebras defined by 
right anticommutativity and a ``noncommutative'' version of the Malcev identity.
We use computational linear algebra to verify that these identities are equivalent to 
the identities of degree $\le 4$ satisfied by the dicommutator in every alternative dialgebra.
We extend these computations to show that any special identity for Malcev dialgebras must have degree at least 7.
Finally, we introduce a trilinear operation which makes any Malcev dialgebra into a Leibniz triple system.
\end{abstract}

\maketitle

%%%%%%%%%%%%%%%%%%%%%%%%%%%%%%%%%%%%%%%%%%%%%%%%%%%%%%%%%%%%%%%%%%%%%%%%

\section{Introduction}

In this paper we introduce the appropriate generalization of Malcev algebras to the setting of dialgebras.
These new structures, which we call Malcev dialgebras, are related to Malcev algebras 
in the same way that Leibniz algebras are related to Lie algebras; 
they are related to alternative dialgebras in the same way that Malcev algebras are related to alternative algebras.
To obtain the defining identities for Malcev dialgebras, we apply Kolesnikov's algorithm 
to anticommutativity and the Malcev identity; 
we obtain right anticommutativity and a ``noncommutative'' version of the Malcev identity.
We use computer algebra to verify that these identities are equivalent to the identities of degree $\le 4$ 
satisfied by the dicommutator in every alternative dialgebra,
and that the resulting identities imply every identity of degree $\le 6$ satisfied by the dicommutator 
in every alternative dialgebra. 
We then generalize the construction of Loos, which defines the structure of a Lie triple system on a Malcev algebra, 
to the setting of dialgebras: 
we introduce a trilinear operation on a Malcev dialgebra which makes the underlying vector space into 
a Leibniz triple system in the sense of Bremner and S\'anchez-Ortega \cite{BSO}.

%%%%%%%%%%%%%%%%%%%%%%%%%%%%%%%%%%%%%%%%%%%%%%%%%%%%%%%%%%%%%%%%%%%%%%%%

\section{Preliminaries} \label{sectionpreliminaries}

Dialgebras were introduced by Loday \cite{LodayDialgebras} (see also \cite{LodaySurvey}) 
to provide a natural setting for Leibniz algebras, a ``noncommutative'' version 
of Lie algebras.

\begin{definition} (Cuvier \cite{Cuvier}, Loday \cite{LodayLeibniz})
A {\bf Leibniz algebra} is a vector space $L$ together with a bilinear map 
$L \times L \to L$, denoted $(a,b) \mapsto \langle a,b \rangle $, 
satisfying the {\bf Leibniz identity}, which says that right multiplications are derivations:
  \begin{equation} \label{Rightleibnizidentity}
  \langle \langle a, b \rangle, c\rangle 
  \equiv 
  \langle \langle a, c \rangle, b \rangle 
  + 
  \langle a, \langle b, c   \rangle \rangle.
  \end{equation}
If $\langle a, a \rangle \equiv 0$ then the Leibniz identity is the Jacobi identity and $L$
is a Lie algebra.
\end{definition}

An associative algebra becomes a Lie algebra if the associative product is replaced by the Lie bracket. 
The notion of dialgebra gives, by a similar procedure, a Leibniz algebra: one replaces the associative 
products $ab$ and $ba$ by two distinct operations, so that the resulting bracket is not 
necessarily skew-symmetric.

\begin{definition} 
\label{definitiondialgebras}
(Loday \cite{LodayDialgebras})
A \textbf{dialgebra} is a vector space $D$ with two bilinear operations
$\dashv\colon D \times D \to D$ and $\vdash\colon D \times D \to D$,
called the \textbf{left} and \textbf{right} products.
\end{definition} 

\begin{definition} 
\label{definition0dialgebras}
(Kolesnikov \cite{Kolesnikov})
A \textbf{0-dialgebra} is a dialgebra satisfying the \textbf{left} and \textbf{right bar identities}:
  \[ 
  ( a \dashv b ) \vdash c
  \equiv
  ( a \vdash b ) \vdash c,
  \qquad
  a \dashv ( b \dashv c )
  \equiv
  a \dashv ( b \vdash c ).
  \]
\end{definition} 

\begin{definition} 
\label{definitionassociativedialgebras}
(Loday \cite{LodayDialgebras})
An \textbf{associative dialgebra} is a 0-dialgebra satisfying \textbf{left, right} and \textbf{inner associativity}:
  \[
  ( a \dashv b ) \dashv c
  \equiv
  a \dashv ( b \dashv c ),
  \quad
  ( a \vdash b ) \vdash c
  \equiv
  a \vdash ( b \vdash c ),
  \quad
  ( a \vdash b ) \dashv c
  \equiv
  a \vdash ( b \dashv c ).
  \]
\end{definition} 

\begin{definition}
In any dialgebra, the \textbf{dicommutator} is the bilinear operation 
  \[
  \langle a, b \rangle = a \dashv b - b \vdash a.
  \]
\end{definition}

An associative dialgebra gives rise to a Leibniz algebra by considering the same underlying vector space 
with the product defined to be the dicommutator.
The goal of the present paper is to study the same construction for alternative dialgebras.

\begin{definition} 
\label{definitionalternativedialgebras}
(Liu \cite{Liu})
An \textbf{alternative dialgebra} is a 0-dialgebra satisfying:
  \[
  (a,b,c)_\dashv + (c,b,a)_\vdash \equiv 0,
  \quad
  (a,b,c)_\dashv - (b,c,a)_\vdash \equiv 0,
  \quad
  (a,b,c)_\times + (a,c,b)_\vdash \equiv 0,
  \]
where the \textbf{left}, \textbf{right} and \textbf{inner associators} are defined by
\allowdisplaybreaks 
\begin{alignat*}{2}
  (a,b,c)_\dashv &= ( a \dashv b ) \dashv c - a \dashv ( b \dashv c ),
  &\qquad
  (a,b,c)_\vdash &= ( a \vdash b ) \vdash c - a \vdash ( b \vdash c ),
  \\
  (a,b,c)_\times &= ( a \vdash b ) \dashv c - a \vdash ( b \dashv c ).
  \end{alignat*}
\end{definition}

\subsection*{Kolesnikov's algorithm}

Kolesnikov \cite{Kolesnikov} (see also Pozhidaev \cite{Pozhidaev}) introduced a general framework for 
converting the defining identities of a variety of algebras into the defining identities of 
the corresponding variety of dialgebras.
Part 1 converts a multilinear polynomial identity of degree $d$ 
for a bilinear operation into $d$ multilinear identities of degree $d$ for two new bilinear operations. 
Part 2 introduces the analogues of the bar identities for each new operation.

Part 1:
We consider a bilinear operation, not necessarily associative, denoted by the symbol $\{-,-\}$.
Given a multilinear polynomial identity of degree $d$ in this operation, 
we show how to apply the algorithm to one monomial, 
and from this the application to the complete identity follows by linearity. 
Let $\overline{a_1 a_2 \dots a_d}$ be a multilinear monomial of degree $d$, 
where the bar denotes some placement of operation symbols. 
We introduce two new operations, denoted by the same symbol but distinguished by subscripts: $\{-,-\}_1$, $\{-,-\}_2$.
For each $i \in \{1, 2, \dots, d\}$ we convert the monomial $\overline{a_1 a_2 \dots a_d}$
in the original operation into a new monomial of the same degree $d$ in the two new operations, 
according to the following rule, which is based on the position of $a_i$,
called the central argument of the monomial. 
For each occurrence of the original operation $\{-,-\}$ in the monomial, 
either $a_i$ occurs within one of the two arguments or not, and we have the following cases:
  \begin{itemize}
  \item
  If $a_i$ occurs within the $j$-th argument then we convert the original operation
  $\{-,-\}$ to the $j$-th new operation $\{-,-\}_j$.
  \item
  If $a_i$ does not occur within either of the two arguments, then either
    \begin{itemize}
    \item
    $a_i$ occurs to the left of the original operation, 
    in which case we convert $\{-,-\}$ to the first new operation $\{-,-\}_1$, 
    or
    \item
    $a_i$ occurs to the right of the original operation,
    in which case we convert $\{-,-\}$ to the second new operation $\{-,-\}_2$.
    \end{itemize}
  \end{itemize}

Part 2:
We also include the following two identities, analogous to the left and 
right bar identities of Definition \ref{definition0dialgebras}.
These identities say that the two new operations are interchangeable in 
the $i$-th argument of the $j$-th new operation when $i \ne j$:
  \[
  \{ \{ a, b \}_1, c \}_2 \equiv \{ \{ a, b \}_2, c \}_2,
  \qquad
  \{ a, \{ b, c \}_1 \}_1 \equiv \{ a, \{ b, c \}_2 \}_1.
  \]

\begin{example}
The definition of associative dialgebra can be obtained by applying Kolesnikov's algorithm 
to associativity, $\{ \{ a, b \}, c \} \equiv \{ a, \{ b, c \} \}$.
Part 1 produces three new identities of degree 3
by making $a$, $b$, $c$ in turn the central argument:
  \begin{alignat*}{2}
  &
  \{ \{ a, b \}_1, c \}_1 \equiv \{ a, \{ b, c \}_1 \}_1, 
  &\quad
  &
  \{ \{ a, b \}_2, c \}_1 \equiv \{ a, \{ b, c \}_1 \}_2,
  \\
  &
  \{ \{ a, b \}_2, c \}_2 \equiv \{ a, \{ b, c \}_2 \}_2.
  \end{alignat*} 
Combining these identities with the two identities from Part 2, and reverting to the standard notation 
$a \dashv b = \{ a, b \}_1$, $a \vdash b = \{ a, b \}_2$,
we obtain Definition \ref{definitionassociativedialgebras}.
\end{example}

\begin{example}
The definition of alternative dialgebra can be obtained by applying Kolesnikov's algorithm 
to right and left alternativity, $(a,a,b) \equiv 0$ and $(b,a,a) \equiv 0$,
where $(x,y,z)= (xy)z - x(yz)$ is the associator. 
If we assume characteristic not 2, then these two identities are equivalent to 
their multilinear forms; we expand the associators and use the operation symbol $\{-,-\}$:
  \begin{align*}  
  &
  \{\{a,b\},c\} - \{a,\{b,c\}\} + \{\{b,a\},c\} - \{b,\{a,c\}\} \equiv 0, \\ 
  &
  \{\{a,b\},c\} - \{a,\{b,c\}\} + \{\{a,c\},b\} - \{a,\{c,b\}\} \equiv 0.
  \end{align*} 
Part 1 produces six identities relating the two new operations $\{-,-\}_1$ and $\{-,-\}_2$:
 \allowdisplaybreaks
  \begin{alignat}{2}
  &
  \{\{a,b\}_1,c\}_1 - \{a,\{b,c\}_1\}_1 + \{\{b,a\}_2,c\}_1 - \{b,\{a,c\}_1\}_2 \equiv 0,
  \label{Id11}
  \\
  &
  \{\{a,b\}_2,c\}_1 - \{a,\{b,c\}_1\}_2 + \{\{b,a\}_1,c\}_1 - \{b,\{a,c\}_1\}_1 \equiv 0,
  \label{Id12}
  \\   
  &
  \{\{a,b\}_2,c\}_2 - \{a,\{b,c\}_2\}_2 + \{\{b,a\}_2,c\}_2 - \{b,\{a,c\}_2\}_2 \equiv 0,
  \label{Id13}
  \\
  &
  \{\{a,b\}_1,c\}_1 - \{a,\{b,c\}_1\}_1 + \{\{a,c\}_1,b\}_1 - \{a,\{c,b\}_1\}_1 \equiv 0,
  \label{Id21}
  \\
  &
  \{\{a,b\}_2,c\}_1 - \{a,\{b,c\}_1\}_2 + \{\{a,c\}_2,b\}_2 - \{a,\{c,b\}_2\}_2 \equiv 0,
  \label{Id22}
  \\
  &
  \{\{a,b\}_2,c\}_2 - \{a,\{b,c\}_2\}_2 + \{\{a,c\}_2,b\}_1 - \{a,\{c,b\}_1\}_2 \equiv 0.
  \label{Id23}
  \end{alignat}
Writing $a\dashv b = \{a,b\}_1$ and $a\vdash b = \{a,b\}_2$ and using the dialgebra associators
of Definition \ref{definitionalternativedialgebras}, 
we rewrite identities (\ref{Id11}) to (\ref{Id23}) as follows:
  \allowdisplaybreaks
  \begin{alignat*}{2}
  (a,b,c)_\dashv + (b,a,c)_\times 
  &\equiv 0,
  &\qquad
  (a,b,c)_\times + (b,a,c)_\dashv 
  &\equiv 0,
  \\
  (a,b,c)_\vdash + (b,a,c)_\vdash 
  &\equiv 0,
  &\qquad
  (a,b,c)_\dashv + (a,c,b)_\dashv 
  &\equiv 0,
  \\
  (a,b,c)_\times + (a,c,b)_\vdash 
  &\equiv 0,
  &\qquad
  (a,b,c)_\vdash + (a,c,b)_\times 
  &\equiv 0.
  \end{alignat*}
These identities show how the transpositions $(ab)$ and $(bc)$ affect the dialgebra associators. 
It is now clear that (\ref{Id11}) and (\ref{Id12}) are equivalent, as are (\ref{Id22}) and (\ref{Id23}).  
Furthermore, (\ref{Id13}) can be derived from (\ref{Id11}), (\ref{Id21}) and (\ref{Id22}):
  \[
  (a,b,c)_\vdash
  \stackrel{(\ref{Id22})}{\equiv}
  -
  (a,c,b)_\times
  \stackrel{(\ref{Id11})}{\equiv}
  (c,a,b)_\dashv
  \stackrel{(\ref{Id21})}{\equiv}
  -
  (c,b,a)_\dashv
  \stackrel{(\ref{Id11})}{\equiv}
  (b,c,a)_\times
  \stackrel{(\ref{Id22})}{\equiv}
  -
  (b,a,c)_\vdash.
  \]
Thus the final result is the variety of dialgebras satisfying these identities: 
  \begin{equation}
  \label{alternativedialgebraidentities}
  (a,b,c)_\dashv + (b,a,c)_\times \equiv 0,
  \;
  (a,b,c)_\dashv + (a,c,b)_\dashv \equiv 0,
  \;
  (a,b,c)_\times + (a,c,b)_\vdash \equiv 0,
  \end{equation}
which are equivalent to the identities of Definition \ref{definitionalternativedialgebras}.
\end{example}

\begin{remark}
The algorithm of Kolesnikov is closely related to general constructions in the theory of operads
discussed by Chapoton \cite{Chapoton} and Vallette \cite{Vallette}.
\end{remark}

%%%%%%%%%%%%%%%%%%%%%%%%%%%%%%%%%%%%%%%%%%%%%%%%%%%%%%%%%%%%%%%%%%%%%%%%

\section{The definition of Malcev dialgebra} \label{sectionMalcevdialgebras}

In this section we recall the defining identities for Malcev algebras, 
and then apply Kolesnikov's algorithm to obtain the defining identities for 
the corresponding variety of dialgebras, which we call Malcev dialgebras. 

\begin{definition} 
(Malcev \cite{Malcev})
A \textbf{Malcev algebra} is a vector space with a bilinear operation $ab$ satisfying 
\textbf{anticommutativity} and the \textbf{Malcev identity}:
  \[
  a^2 \equiv 0,
  \qquad
  J( a, b, ac ) \equiv J( a, b, c ) a,
  \]
where $J(a,b,c) = (ab)c + (bc)a + (ca)b$ is the Jacobian.  
\end{definition}

\begin{lemma}
\label{malcevdefinition}
\emph{(Sagle \cite{Sagle})}
If the characteristic is not 2, then an algebra is Malcev if and only if 
it satisfies the following multilinear identities:
  \[ 
  ab + ba \equiv 0,
  \qquad
  (ac)(bd) \equiv ((ab)c)d + ((bc)d)a + ((cd)a)b + ((da)b)c.
  \]
\end{lemma}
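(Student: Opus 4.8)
The plan is to prove Lemma~\ref{malcevdefinition}, Sagle's equivalent linearization of the Malcev identity, by a direct computation in the free anticommutative algebra, working modulo the stated hypothesis $\mathrm{char}\neq 2$. Throughout I will freely use anticommutativity $ab=-ba$, which lets me rewrite any monomial so that factors appear in a chosen order at the cost of sign changes. The two directions are of unequal difficulty: showing that the multilinear identity implies anticommutativity plus the original Malcev identity is the easy half, while deriving the multilinear identity from anticommutativity plus $J(a,b,ac)\equiv J(a,b,c)a$ is where the real work lies.

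First I would dispatch the forward direction. Anticommutativity is listed verbatim as the first multilinear identity (and $a^2\equiv 0$ follows by setting $b=a$ and dividing by $2$, using $\mathrm{char}\neq 2$). To recover the original Malcev identity, I would start from the second multilinear identity $(ac)(bd)\equiv((ab)c)d+((bc)d)a+((cd)a)b+((da)b)c$ and specialize the variable $d$. The natural move is to set $d=c$; anticommutativity kills the term $((cd)a)b=((cc)a)b$ via $c^2\equiv 0$, and the remaining three right-hand terms should reorganize, after relabeling and sign bookkeeping, into $J(a,b,c)c$ type expressions. I expect that a suitable specialization and symmetrization recovers $J(a,b,ac)\equiv J(a,b,c)a$ exactly; this is a finite check in degree $4$.

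The harder direction is the converse: assume anticommutativity and $J(a,b,ac)\equiv J(a,b,c)a$, and derive the multilinear identity. Since $J$ is already the full multilinear alternating-ish trilinear form, the standard technique is full linearization: in $J(a,b,ac)\equiv J(a,b,c)a$ the repeated letter is the ``$a$'' appearing both as an outer multiplier and inside $ac$, so I would replace that structure by introducing a fresh variable, i.e. polarize $a\mapsto a+e$ and extract the multilinear component, obtaining an identity of the shape $J(a,b,ec)+J(e,b,ac)\equiv J(a,b,c)e+J(e,b,c)a$ valid in any Malcev algebra over a field of characteristic $\neq 2$. One then chooses the new variable cleverly — the key idea is to substitute something like $e=d$ or to iterate the polarization — and expands all the Jacobians $J(x,y,z)=(xy)z+(yz)x+(zx)y$ into the six (or twelve) monomials, then uses anticommutativity to collect like terms. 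After enough substitutions and sign normalization, the left side should collapse to a single term $(ac)(bd)$ (up to sign) and the right side to the four-term sum $((ab)c)d+((bc)d)a+((cd)a)b+((da)b)c$.

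The main obstacle I anticipate is purely organizational rather than conceptual: managing the combinatorial explosion of monomials produced by expanding several nested Jacobians and keeping the anticommutativity-induced signs straight, so that the dozens of terms cancel down to the clean two-plus-four-term identity. In practice this is exactly the kind of degree-$4$ computation that is best verified (or discovered) by computer algebra over $\mathbb{Q}$ in the free anticommutative algebra on four generators, checking that the difference of the two sides lies in the $T$-ideal generated by the polarized Malcev identity; since the paper already relies on computational linear algebra for its main results, I would either reproduce Sagle's original hand computation or simply cite \cite{Sagle} and note that the equivalence has been independently confirmed by the linear-algebra machinery described in the next section. The only subtlety worth flagging is the role of $\mathrm{char}\neq 2$: it is needed both to pass from $a^2\equiv 0$ to $ab+ba\equiv 0$ and to divide by the binomial coefficients that arise when extracting multilinear components during polarization.
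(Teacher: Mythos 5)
A preliminary remark on comparison: the paper contains no proof of this lemma at all — it is quoted from Sagle \cite{Sagle} and used as a black box — so your closing fallback (``simply cite Sagle'') is in fact exactly what the authors do. Judged as a proof attempt, your forward direction is essentially fine: specializing the multilinear identity at $d=a$ (or at $d=c$, which gives the same identity up to relabelling the variables) kills one of the four right-hand terms via $aa\equiv 0$ (resp.\ $cc\equiv 0$), and expanding $J(a,b,ac)\equiv J(a,b,c)a$ and cancelling with anticommutativity shows the specialized identity coincides with the Malcev identity; this is a short finite check, and your use of characteristic $\ne 2$ to get $a^2\equiv 0$ from $ab+ba\equiv 0$ is the correct place for that hypothesis (your later remark that it is also needed to pass from $a^2\equiv 0$ to $ab+ba\equiv 0$ is backwards — that direction is characteristic-free, as is extracting the multilinear part of a quadratic identity).

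The genuine gap is in the converse. Your stated expectation — that after polarizing $J(a,b,ac)\equiv J(a,b,c)a$ in $a$ and expanding the Jacobians, ``the left side should collapse to a single term $(ac)(bd)$'' and the right side to the four-term sum — fails for a single instance of the polarized identity. Writing $L = J(a,b,dc)+J(d,b,ac)-J(a,b,c)d-J(d,b,c)a$ and normalizing every monomial by anticommutativity, one finds
\[
L \;\equiv\; (ac)(bd)-(ab)(cd)-((ab)c)d-((bc)d)a-((cd)a)b+((ac)d)b+((bd)c)a-((bc)a)d,
\]
which has eight normalized terms, whereas the Sagle polynomial $S=(ac)(bd)-((ab)c)d-((bc)d)a-((cd)a)b-((da)b)c$ has five; their difference is a sum of linearly independent monomials, so $L$ is not $S$, nor even a signed permutation of it. Hence ``expand and collect'' applied to one polarized identity cannot succeed: $S$ lies in the $S_4$-module generated by $L$ modulo anticommutativity, and the real content of Sagle's lemma is exhibiting the specific linear combination of permuted copies of $L$ (equivalently, a chain of auxiliary identities) that yields $S$. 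Your sketch gestures at this (``iterate the polarization'', ``enough substitutions'') but never identifies the combination, and then defers to computer algebra or to citing Sagle; as written, the hard half of the equivalence is not actually proved.
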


To apply Kolesnikov's algorithm, we write these identities as follows:
  \begin{align*}
  & 
  \{a,b\} + \{b,a\} \equiv 0, 
  \\
  & 
  \{\{a,c\},\{b,d\}\} - \{\{\{a,b\},c\},d\} - \{\{\{b,c\},d\},a\} - \{\{\{c,d\},a\},b\}
  \\
  &\qquad
  - \{\{\{d,a\},b\},c\} \equiv 0.
\end{align*}
Part 1 gives six identities relating the operations $\{-,-\}_1$ and $\{-,-\}_2$: 
  \allowdisplaybreaks
  \begin{align}
  & 
  \begin{array}{c}
  \{a,b\}_1 + \{b,a\}_2 \equiv 0, 
  \end{array}
  \quad
  \begin{array}{c}
  \{a,b\}_2 + \{b,a\}_1 \equiv 0,
  \end{array}
  \label{anticomm}
  \\
  & 
  \begin{array}{c}
  \{\{a,c\}_1,\{b,d\}_1\}_1 - \{\{\{a,b\}_1,c\}_1,d\}_1 - \{\{\{b,c\}_2,d\}_2,a\}_2
  \\[3pt]
  \qquad 
  - \{\{\{c,d\}_2,a\}_2,b\}_1 - \{\{\{d,a\}_2,b\}_1,c\}_1 \equiv 0,
  \end{array}
  \label{malcev1} 
  \\
  & 
  \begin{array}{c}
  \{\{a,c\}_2,\{b,d\}_1\}_2 - \{\{\{a,b\}_2,c\}_1,d\}_1 - \{\{\{b,c\}_1,d\}_1,a\}_1
  \\[3pt]
  \qquad 
  - \{\{\{c,d\}_2,a\}_2,b\}_2 - \{\{\{d,a\}_2,b\}_2,c\}_1 \equiv 0, 
  \end{array}
  \label{malcev2}
  \\
  & 
  \begin{array}{c}
  \{\{a,c\}_2,\{b,d\}_1\}_1 - \{\{\{a,b\}_2,c\}_2,d\}_1 - \{\{\{b,c\}_2,d\}_1,a\}_1
  \\[3pt]
  \qquad 
  - \{\{\{c,d\}_1,a\}_1,b\}_1 - \{\{\{d,a\}_2,b\}_2,c\}_2 \equiv 0,
  \end{array}
  \label{malcev3}
  \\
  & 
  \begin{array}{c}
  \{\{a,c\}_2,\{b,d\}_2\}_2 - \{\{\{a,b\}_2,c\}_2,d\}_2 - \{\{\{b,c\}_2,d\}_2,a\}_1 
  \\[3pt]
  \qquad 
  - \{\{\{c,d\}_2,a\}_1,b\}_1 - \{\{\{d,a\}_1,b\}_1,c\}_1 \equiv 0.
  \end{array}
  \label{malcev4}
  \end{align}
The two identities \eqref{anticomm} are equivalent;
both say $\{a,b\}_2 \equiv - \{b,a\}_1$, so we can eliminate the second operation. 
Applying this to identities (\ref{malcev1})--(\ref{malcev4}), we obtain:
  \allowdisplaybreaks
  \begin{align}
  & 
  \begin{array}{c}
  \{\{a,c\}_1,\{b,d\}_1\}_1 - \{\{\{a,b\}_1,c\}_1,d\}_1 + \{a,\{d,\{c,b\}_1\}_1\}_1 
  \\[3pt]
  \qquad
  - \{\{a,\{d,c\}_1\}_1,b\}_1 + \{\{\{a,d\}_1,b\}_1,c\}_1 \equiv 0,
  \end{array}
  \label{malcev11} 
  \\
  & 
  \begin{array}{c}
  \{\{b,d\}_1,\{c,a\}_1\}_1 + \{\{\{b,a\}_1,c\}_1,d\}_1 - \{\{\{b,c\}_1,d\}_1,a\}_1
  \\[3pt]
  \qquad 
  + \{b,\{a,\{d,c\}_1\}_1\}_1 - \{\{b,\{a,d\}_1\}_1,c\}_1 \equiv 0, 
  \end{array}
  \label{malcev21}
  \\
  & 
  \begin{array}{c}
  \{\{c,a\}_1,\{b,d\}_1\}_1 + \{\{c,\{b,a\}_1\}_1,d\}_1 - \{\{\{c,b\}_1,d\}_1,a\}_1
  \\[3pt]
  \qquad 
  + \{\{\{c,d\}_1,a\}_1,b\}_1 - \{c,\{b,\{a,d\}_1\}_1\}_1 \equiv 0,
  \end{array}
  \label{malcev31}
  \\
  & 
  \begin{array}{c}
  \{\{d,b\}_1,\{c,a\}_1\}_1 - \{d,\{c,\{b,a\}_1\}_1\}_1 + \{\{d,\{c,b\}_1\}_1,a\}_1
  \\[3pt]
  \qquad
  - \{\{\{d,c\}_1,a\}_1,b\}_1 + \{\{\{d,a\}_1,b\}_1,c\}_1 \equiv 0.
  \end{array}
  \label{malcev41}
  \end{align}
Since we now have only one operation, we revert to a simpler notation, 
and write $\{a,b\}_1$ simply as $ab$.
Identities \eqref{malcev11}--\eqref{malcev41} take the following form:
  \allowdisplaybreaks
  \begin{align}
  & 
  (ac)(bd) - ((ab)c)d + a(d(cb)) - (a(dc))b + ((ad)b)c \equiv 0,
  \label{malcev12} 
  \\
  & 
  (bd)(ca) + ((ba)c)d - ((bc)d)a + b(a(dc)) - (b(ad))c \equiv 0,
  \label{malcev22}  
  \\
  - \;
  & 
  (ca)(bd) - (c(ba))d + ((cb)d)a - ((cd)a)b + c(b(ad)) \equiv 0,
  \label{malcev32}
  \\
  - \;
  & 
  (db)(ca) + d(c(ba)) - (d(cb))a + ((dc)a)b - ((da)b)c \equiv 0.
  \label{malcev42}
  \end{align}
Part 2 gives two identities; rewriting them in terms of the first operation gives
  \[
  \{ a, \{ b, c \}_1 \}_1 \equiv - \{ a, \{ c, b \}_1 \}_1,
  \qquad
  - \{ c, \{ a, b \}_1 \}_1 \equiv \{ c, \{ b, a \}_1 \}_1,
  \]
which are both equivalent to right anticommutativity $a(bc) \equiv - a(cb)$.
We note that \eqref{malcev42} is a permutation of \eqref{malcev32}. 
Furthermore, rearranging the terms in \eqref{malcev22} and \eqref{malcev32}, 
and applying right anticommutativity, gives \eqref{malcev12}.
Thus we require only one identity in degree 4.

\begin{definition}  \label{MDdefinition}
Over a field of characteristic not $2$, a \textbf{(right) Malcev dialgebra} 
is a vector space with a bilinear operation $ab$
satisfying \textbf{right anticommutativity} and the \textbf{di-Malcev identity}:
  \[
  a(bc) + a(cb) \equiv 0,
  \qquad
  ((ab)c)d - ((ad)b)c - (a(cd))b - (ac)(bd) - a((bc)d) \equiv 0.
  \]
\end{definition}

%%%%%%%%%%%%%%%%%%%%%%%%%%%%%%%%%%%%%%%%%%%%%%%%%%%%%%%%%%%%%%%%%%%%%%%%

\section{The dicommutator in an alternative dialgebra} \label{dicommutatorsection}

Malcev \cite{Malcev} showed that an alternative algebra becomes a Malcev algebra 
by considering the same underlying vector space with the new operation $ab - ba$.
In this section we extend this result to the setting of dialgebras:
we use computer algebra to show that any subspace of an alternative dialgebra closed under the dicommutator 
is a Malcev dialgebra, and conversely that the dicommutator identities of degrees $\le 4$ 
are equivalent to right anticommutativity and the di-Malcev identity. 

We write $F\!A_n$ for the multilinear subspace of degree $n$ in the free nonassociative algebra 
on $n$ generators.  The number of association types (distinct placements of parentheses) in 
degree $n$ is the Catalan number,
  \[
  K_n = \frac{1}{n} \binom{2n{-}2}{n{-}1}.
  \]
Since there are $n!$ permutations of $n$ indeterminates, we have $\dim F\!A_n = n! K_n$.

\begin{lemma} \label{barsidelemma}
Let $X$ be a set. For any $a_1, \hdots, a_n \in X$, let $w = a_1 \hdots a_n$ be a monomial 
in the free 0-dialgebra on $X$, with some placement of parentheses and choice of operations. 
If $x \dashv y$ or $y \vdash x$ is a submonomial, then $y$ does not depend on the choice of 
operations: we may regard $y$ as a monomial in the free algebra.
\end{lemma}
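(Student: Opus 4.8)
The plan is to induct on the length $n$ of the monomial $w = a_1 \cdots a_n$, the base case $n \le 2$ being vacuous since no submonomial of the form $x \dashv y$ or $y \vdash x$ with $y$ a proper submonomial can occur. For the inductive step, I would write $w = u * v$ where $* \in \{\dashv, \vdash\}$ is the outermost operation and $u, v$ are the two (shorter) arguments. A submonomial of the form $x \dashv y$ or $y \vdash x$ is either (a) the whole of $w$, or (b) contained entirely within $u$, or (c) contained entirely within $v$. In cases (b) and (c) the inductive hypothesis applied to $u$ or $v$ immediately gives that $y$ does not depend on the choice of operations.

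The heart of the matter is case (a). Here I must show: if $w = x \dashv y$ then $y$, as a monomial in the free $0$-dialgebra, is independent of the operation symbols appearing inside it, and similarly if $w = y \vdash x$. The key observation is that $y$ is the \emph{second} argument of $\dashv$ in the first case (and the \emph{first} argument of $\vdash$ in the second case), so the right bar identity $a \dashv (b \dashv c) \equiv a \dashv (b \vdash c)$ — respectively the left bar identity $(a \dashv b) \vdash c \equiv (a \vdash b) \vdash c$ — applies to the \emph{outermost} operation of $y$ itself. That is, if $y = y_1 \dashv y_2$ or $y = y_1 \vdash y_2$, then in the context $x \dashv y$ (or $y \vdash x$) the two choices for the outermost operation of $y$ give equal elements of the free $0$-dialgebra. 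Iterating this down the ``left comb'' structure of $y$ — each time peeling off the outermost operation, which may then be freely switched because of the bar identity inherited from the enclosing context — shows that every operation symbol occurring in $y$ may be chosen arbitrarily without changing the element represented. Hence $y$ is well-defined as a monomial in the free nonassociative algebra, i.e. we may forget the decorations.

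The main obstacle is bookkeeping the induction so that the bar identities can genuinely be applied at \emph{every} internal node of $y$, not just its root: when I rewrite the outermost operation of $y$ using a bar identity, I then need the same freedom inside the (now exposed) arguments of $y$, which requires checking that each of those arguments again sits in the second-argument-of-$\dashv$ or first-argument-of-$\vdash$ position relative to some operation, so that a further bar identity is available. I expect this to follow cleanly because the bar identities of Definition \ref{definition0dialgebras}, together with Part 2 of Kolesnikov's algorithm, are precisely designed so that the ``inner'' argument of any operation is operation-insensitive; formally, one phrases the induction on subterms of $y$ rather than on $w$, carrying along the hypothesis ``$t$ occurs as the bar-side argument of its parent operation'' and verifying it is preserved when passing to the children of $t$.
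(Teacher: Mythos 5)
Your proposal is correct and takes essentially the same route as the paper, whose entire proof is the single line ``Induction on the degree $n$ using the bar identities of Definition \ref{definition0dialgebras}.'' The bookkeeping you spell out --- that a subterm in the bar-side position (right argument of $\dashv$ or left argument of $\vdash$) passes this property on to its children once its own root operation is flipped via the parent's bar identity --- is exactly the content the paper leaves implicit.
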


\begin{proof}
Induction on the degree $n$ using the bar identities of Definition \ref{definition0dialgebras}.
\end{proof}

We write $FD_n$ for the multilinear subspace of degree $n$ in the free 0-dialgebra on $n$ generators.

\begin{lemma} \label{dialgebratypeslemma}
The number of 0-dialgebra association types in degree $n$ is
  \[
  Z_n = \binom{2n{-}2}{n{-}1}.
  \]
\end{lemma}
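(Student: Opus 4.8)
The plan is to count 0-dialgebra association types in degree $n$ by combining two ingredients: the underlying nonassociative parenthesization (a binary tree with $n$ leaves), together with a choice of operation ($\dashv$ or $\vdash$) at each internal node, and then to quotient by the equivalences forced by Lemma \ref{barsidelemma}. The key observation is that Lemma \ref{barsidelemma} says exactly which operation choices are \emph{redundant}: whenever a subtree $y$ sits as the right-hand argument of a $\dashv$ or the left-hand argument of a $\vdash$, the choices of operation internal to $y$ are irrelevant. So instead of recording all choices, I would record only the ``live'' ones.

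Concretely, I would set up the counting recursively. A 0-dialgebra association type of degree $n$ is determined by choosing a split $n = p + q$ with $1 \le p,q$, a top operation, and types for the two sides --- but with the caveat that one side is ``frozen'' (its internal operations do not matter). If the top operation is $\dashv$, the right side of degree $q$ contributes only its bare parenthesization, i.e.\ $K_q$ choices, while the left side of degree $p$ contributes a full 0-dialgebra type; symmetrically for $\vdash$. This suggests that $Z_n$ satisfies a convolution-type recursion, and the single-leaf case gives $Z_1 = 1$. I would then verify that $Z_n = \binom{2n-2}{n-1}$ satisfies this recursion, most cleanly by checking that the generating function $\sum_n Z_n x^n$ equals $\tfrac{1}{2}\bigl(1 - \sqrt{1-4x}\bigr) \cdot (\text{something})$ --- more precisely, since $\binom{2n-2}{n-1}$ has generating function $x/\sqrt{1-4x}$ (up to indexing), I would confirm the functional equation it must satisfy and match it against the recursion coming from the freezing rule.

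An alternative and perhaps cleaner route, which I would actually prefer to write up, is a direct bijective or ``normal form'' argument: by Lemma \ref{barsidelemma}, every 0-dialgebra monomial is equivalent to a canonical one in which every frozen subtree uses, say, only the left operation $\dashv$ internally. Then a 0-dialgebra association type in degree $n$ is the same as a binary tree with $n$ leaves together with a choice of operation at each node \emph{that is not the root of a frozen subtree}. One checks that the set of ``free'' nodes along any root-to-leaf path forms an initial segment in a suitable sense, and a careful encoding shows the count is the central binomial coefficient $\binom{2n-2}{n-1}$; recall $\dim F\!A_n = n! K_n = n! \cdot \tfrac{1}{n}\binom{2n-2}{n-1}$, and here the multilinear dimension will be $n!\, Z_n$ once we observe $\dim FD_n = n! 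Z_n$. The arithmetic identity $K_n \cdot n = \binom{2n-2}{n-1} = Z_n$ is a useful sanity check: it shows $Z_n = n K_n$, so each Catalan parenthesization type ``splits'' into exactly $n$ dialgebra types, which matches the intuition from Kolesnikov's Part 1 (a degree-$n$ identity becomes $n$ identities, one per central argument).

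The main obstacle I anticipate is bookkeeping the freezing rule correctly: a subtree can be frozen because of an operation choice made arbitrarily high above it, and frozen-ness propagates downward but interacts subtly with the two different triggering conditions ($\dashv$ on the right versus $\vdash$ on the left). I would handle this by introducing an auxiliary count $Z_n^{\mathrm{fr}} = K_n$ for ``types of a subtree known to be frozen'' and $Z_n$ for ``types of a subtree at a live position,'' writing the mutual recursion
  \[
  Z_n = \sum_{p+q=n} \bigl( Z_p\, K_q + K_p\, Z_q \bigr), \qquad Z_1 = 1,
  \]
and then proving by induction (or generating functions) that this forces $Z_n = \binom{2n-2}{n-1}$. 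Verifying that this recursion indeed captures the 0-dialgebra types --- no more, no less --- is the real content, and it rests entirely on Lemma \ref{barsidelemma} together with the fact that no further identities hold in the free 0-dialgebra.
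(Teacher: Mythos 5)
Your proposal is correct and follows essentially the same route as the paper: Lemma \ref{barsidelemma} freezes the bar side of each top-level product, giving the convolution recursion $Z_n = \sum_{p+q=n}\bigl(Z_p K_q + K_p Z_q\bigr) = 2\sum_{i=1}^{n-1} Z_{n-i}K_i$ with $Z_1=1$, whose solution is $Z_n = nK_n = \binom{2n-2}{n-1}$, exactly as in the paper's proof. The generating-function check $z(x)=x/\sqrt{1-4x}$ is a fine way to finish, equivalent to the paper's assertion that the recursion has this unique solution.
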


\begin{proof}
Suppose that we have enumerated the 0-dialgebra association types up to degree $n{-}1$. 
By Lemma \ref{barsidelemma}, any 0-dialgebra association type in degree $n$ is either 
$x \dashv y$ or $y \vdash x$ where $x$ is a 0-dialgebra association type in degree $n{-}i$ 
and $y$ is an algebra association type in degree $i$, for some $i < n$.  Therefore
  \[
  Z_1 = 1,
  \qquad
  Z_n = 2 \sum_{i=1}^{n-1} Z_{n-i} K_i \;\; (n \ge 2).
  \]
The unique solution to this recurrence relation is $Z_n = n K_n$.
\end{proof}

\begin{lemma} \label{dialgebradimensionlemma}
We have $\dim FD_n = n! Z_n$.
\end{lemma}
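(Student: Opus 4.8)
The plan is to mimic the standard count $\dim F\!A_n = n! K_n$: since $FD_n$ is the multilinear subspace of degree $n$ in the free $0$-dialgebra on $n$ generators, a basis is given by the multilinear monomials, each of which is determined by a choice of $0$-dialgebra association type in degree $n$ together with an assignment of the $n$ distinct generators to the $n$ leaves of that type. By Lemma~\ref{dialgebratypeslemma} there are $Z_n$ association types, and for each type there are $n!$ ways to place the generators $a_1,\dots,a_n$ in the $n$ argument slots. Hence $\dim FD_n \le n! Z_n$, and the whole point is to check that these $n! Z_n$ monomials are actually linearly independent in $FD_n$.

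First I would make precise what ``free $0$-dialgebra'' means here: it is the quotient of the absolutely free dialgebra (with the two operations $\dashv$, $\vdash$) by the T-ideal generated by the left and right bar identities. So linear independence of the chosen monomials is equivalent to showing that the bar identities, when applied as rewriting rules, produce exactly the normal forms we have counted and introduce no further collapses. The natural approach is to use Lemma~\ref{barsidelemma} to obtain a normal form: in any monomial, every submonomial of the shape $x \dashv y$ or $y \vdash x$ may be rewritten so that $y$ is a fixed algebra monomial (say, left-normed, or whatever convention was used in proving Lemma~\ref{dialgebratypeslemma}), and this rewriting terminates. One then argues that distinct normal-form monomials are linearly independent because the bar identities are homogeneous and multilinear, so the only relations among monomials in $FD_n$ are consequences of the bar identities, and the normal-form procedure shows each equivalence class contains a unique normal-form monomial.

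Concretely, the cleanest route is to build an explicit model of the free $0$-dialgebra whose degree-$n$ multilinear component has a basis indexed by (association type in the sense of Lemma~\ref{dialgebratypeslemma}) $\times$ (bijection of generators to slots), and to verify directly that this model satisfies the bar identities and has the required universal property; then $\dim FD_n = n! Z_n$ follows by construction. Alternatively, since the recursion $Z_n = 2\sum_{i=1}^{n-1} Z_{n-i} K_i$ was derived structurally, one can run the same recursion at the level of the spaces $FD_n$: decomposing $FD_n$ according to whether the outermost operation is $\dashv$ or $\vdash$ and applying Lemma~\ref{barsidelemma} to pin down the ``non-central'' argument as an element of $F\!A_i$, one gets a direct-sum decomposition $FD_n \cong \bigoplus_{i=1}^{n-1} \big( (FD_{n-i} \otimes F\!A_i) \oplus (FD_{n-i} \otimes F\!A_i) \big)$ compatible with the generator labelling, whose dimension count reproduces $n! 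Z_n$ by induction on $n$, the base case $FD_1$ being one-dimensional.

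The main obstacle will be the linear-independence half: showing that the bar identities do not force any unexpected linear relations among the normal-form monomials. The termination and confluence of the rewriting coming from the bar identities needs a genuine argument (an induction on degree as in Lemma~\ref{barsidelemma}, tracking how the rewriting strictly simplifies the ``$\vdash$/$\dashv$ structure'' of the non-central arguments), after which independence is immediate since the model built from normal forms has the correct universal property. In practice the paper likely disposes of this in one line by induction on $n$, citing Lemma~\ref{barsidelemma} and Lemma~\ref{dialgebratypeslemma}; I would follow that, but the honest content is the confluence/normal-form claim underlying Lemma~\ref{barsidelemma}.
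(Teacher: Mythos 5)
Your proposal is correct and takes essentially the same route as the paper, which in fact states this lemma without any proof, treating it as the immediate count (by Lemmas \ref{barsidelemma} and \ref{dialgebratypeslemma}) of $Z_n$ association types times $n!$ placements of the generators. Your extra discussion of the linear-independence half, via normal forms or an explicit model of the free $0$-dialgebra, is precisely the content the paper leaves tacit, and your sketch of it is sound.
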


\begin{proposition} \label{degree3proposition}
Over a field of characteristic not 2 or 3, every multilinear polynomial
identity in degree 3 satisfied by the dicommutator in every alternative
dialgebra is a consequence of right anticommutativity.
\end{proposition}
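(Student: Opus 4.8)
The plan is to set up the computation as a problem in linear algebra over the multilinear component of degree $3$, exactly in the spirit of the later sections of the paper. First I would fix an ordering of the $3! = 6$ permutations of $\{a,b,c\}$ and an ordering of the $Z_3 = \binom{4}{2} = 6$ association types for $0$-dialgebras in degree $3$, so that $\dim FD_3 = 6 \cdot 6 = 36$. I would then build the ``expansion matrix'' whose columns are indexed by these $36$ basis monomials of $FD_3$ and whose rows record the image of each monomial under the dicommutator map $\langle a,b\rangle = a\dashv b - b\vdash a$ into the free alternative dialgebra, re-expressed in a fixed spanning set of the degree-$3$ multilinear component of the free alternative dialgebra (using the defining relations of Definition~\ref{definitionalternativedialgebras}, or equivalently \eqref{alternativedialgebraidentities}, together with the bar identities and Lemma~\ref{barsidelemma} to put every associator into a normal form). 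The kernel of this matrix is precisely the space of multilinear dicommutator identities in degree $3$.

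Next I would compute, on the other side, the subspace $R \subseteq FD_3$ spanned by all substitution instances of right anticommutativity $a(bc)+a(cb)\equiv 0$: there are $6$ ways to fill the three slots with $a,b,c$, and for each of the degree-$3$ dialgebra association types one obtains a relation of the form (type with $\dashv$ or $\vdash$ on the outside, and the inner product either $\dashv$ or $\vdash$), giving a spanning set of vectors in the $36$-dimensional space. I would row-reduce to get $\dim R$. The proposition then amounts to checking that the kernel of the expansion matrix equals $R$; since $R$ is visibly contained in that kernel (right anticommutativity does hold for the dicommutator in any alternative dialgebra, because $\langle a,\langle b,c\rangle\rangle + \langle a,\langle c,b\rangle\rangle$ expands to an alternating sum of associators that vanishes by the alternative dialgebra identities), it suffices to verify that the two subspaces have the same dimension, i.e.\ $\dim\ker = \dim R$.

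I expect the main obstacle to be bookkeeping rather than conceptual: correctly expanding the dicommutator in each of the $36$ association types and reducing the result modulo the alternative dialgebra relations to a canonical form, since each such expansion is a sum of up to eight terms and the alternative dialgebra has a fairly rich relation module in degree $3$. One must be careful that the ambient space — the degree-$3$ multilinear component of the \emph{free} alternative dialgebra — is itself computed correctly; a clean way is to take the $36$-dimensional $FD_3$, quotient by the span of all instances of the three identities \eqref{alternativedialgebraidentities}, and work in that quotient, so that the dicommutator expansion is just a linear map $FD_3 \to FD_3/(\text{alt.\ relations})$. The characteristic hypothesis (not $2$ or $3$) enters in two places: characteristic not $2$ is already needed to pass to the multilinear identities and to define the dicommutator symmetrically, and characteristic not $3$ is needed so that the rank computations over $\mathbb{Q}$ remain valid after reduction (equivalently, so that no identity appears only in characteristic $3$); in practice I would run the linear algebra over $\mathbb{Q}$ and over a few large primes to confirm the rank is stable. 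Once the dimension count closes, the proposition follows, and it also pins down a convenient fact for the next section: the quotient $FD_3/R$ injects into the free Malcev dialgebra in degree $3$.
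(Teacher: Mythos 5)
Your overall strategy coincides with the paper's: expand degree-3 monomials via the dicommutator, reduce modulo the span of the multilinear instances of the alternative dialgebra identities \eqref{alternativedialgebraidentities} (the paper's $18\times 36$ matrix $A$), and decide by linear algebra which elements expand to zero in the quotient; the paper packages this as the block matrix $M$ of \eqref{blockmatrix} and reads the identities off the rows of $\mathrm{RCF}(M)$ with leading 1s in the right-hand columns. However, as written your proposal confuses the two spaces involved, and this is a genuine gap, not bookkeeping. The dicommutator identities to be classified are elements of $F\!A_3$, the $12$-dimensional multilinear component of the free nonassociative algebra on \emph{one} binary operation; the expansion map goes $F\!A_3 \to FD_3$ (not $FD_3 \to FD_3/(\text{alt.\ relations})$ as you state at the end, and not a matrix whose rows are images of $FD_3$-monomials), and the space of identities is the kernel of the composite $F\!A_3 \to FD_3/\operatorname{span}(\text{alternative relations})$, a subspace of $F\!A_3$. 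Correspondingly, the relevant consequences of right anticommutativity in degree 3 are the three multilinear permutations $a(bc)+a(cb)$, $b(ac)+b(ca)$, $c(ab)+c(ba)$, a $3$-dimensional subspace of $F\!A_3$. Your $R$ is instead built inside $FD_3$ by decorating the identity with choices of $\dashv$ and $\vdash$; that is not the same object (the dicommutator expansion of $a(bc)$ is a signed four-term sum, not a decorated monomial), it lives in a different space from your kernel, so the comparison ``$\ker = R$'' does not typecheck and the proposed dimension count would not prove the proposition. (The same confusion surfaces in your closing remark: it is $F\!A_3/R$, not $FD_3/R$, that is the degree-3 multilinear part of the free Malcev dialgebra.)

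Once the spaces are straightened out, your containment-plus-dimension argument is sound and essentially reproduces the paper's computation, which finds that exactly three rows of $\mathrm{RCF}(M)$ have leading 1s to the right of the dividing line and that they are precisely the three permutations of right anticommutativity. Two smaller corrections: right anticommutativity of the dicommutator already follows from the bar identities of a 0-dialgebra, since the expansion of $\langle a,\langle b,c\rangle\rangle$ is visibly skew in $b$ and $c$; it is not ``an alternating sum of associators.'' And your treatment of the characteristic is not quite sufficient: confirming rank stability over $\mathbb{Q}$ and a few large primes says nothing about characteristic 3. The paper's justification is that the least common multiple of the denominators occurring in $\mathrm{RCF}(M)$ is 6, which is what confines the excluded characteristics to 2 and 3; alternatively one could rerun the linear algebra over $\mathbb{F}_3$ directly.
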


\begin{proof}
There are two algebra association types, $(ab)c$ and $a(bc)$, and 12 basis monomials for $F\!A_3$,
which we list in lexicographical order:
  \[
  (ab)c, \; (ac)b, \; (ba)c, \; (bc)a, \; (ca)b, \; (cb)a, \;
  a(bc), \; a(cb), \; b(ac), \; b(ca), \; c(ab), \; c(ba).
  \]
Lemmas \ref{barsidelemma}--\ref{dialgebradimensionlemma} imply that we need only 6 dialgebra association
types:
  \[
  \begin{array}{llllll}
  ( a \dashv b ) \dashv c,
  &\quad
  ( a \vdash b ) \dashv c,
  &\quad
  ab \vdash c,
  &\quad
  a \dashv bc,
  &\quad
  a \vdash ( b \dashv c ),
  &\quad
  a \vdash ( b \vdash c ).
  \end{array}
  \]
We therefore have 36 basis monomials for $FD_3$ in lexicographical order: 
  \[
  \begin{array}{llllll}
  ( a {\dashv} b ) {\dashv} c, &\quad
  ( a {\dashv} c ) {\dashv} b, &\quad
  ( b {\dashv} a ) {\dashv} c, &\quad
  ( b {\dashv} c ) {\dashv} a, &\quad
  ( c {\dashv} a ) {\dashv} b, &\quad
  ( c {\dashv} b ) {\dashv} a,
  \\
  ( a {\vdash} b ) {\dashv} c, &\quad
  ( a {\vdash} c ) {\dashv} b, &\quad
  ( b {\vdash} a ) {\dashv} c, &\quad
  ( b {\vdash} c ) {\dashv} a, &\quad
  ( c {\vdash} a ) {\dashv} b, &\quad
  ( c {\vdash} b ) {\dashv} a,
  \\
  ab {\vdash} c, &\quad
  ac {\vdash} b, &\quad
  ba {\vdash} c, &\quad
  bc {\vdash} a, &\quad
  ca {\vdash} b, &\quad
  cb {\vdash} a,
  \\
  a {\dashv} bc, &\quad
  a {\dashv} cb, &\quad
  b {\dashv} ac, &\quad
  b {\dashv} ca, &\quad
  c {\dashv} ab, &\quad
  c {\dashv} ba,
  \\
  a {\vdash} ( b {\dashv} c ), &\quad
  a {\vdash} ( c {\dashv} b ), &\quad
  b {\vdash} ( a {\dashv} c ), &\quad
  b {\vdash} ( c {\dashv} a ), &\quad
  c {\vdash} ( a {\dashv} b ), &\quad
  c {\vdash} ( b {\dashv} a ),
  \\
  a {\vdash} ( b {\vdash} c ), &\quad
  a {\vdash} ( c {\vdash} b ), &\quad
  b {\vdash} ( a {\vdash} c ), &\quad
  b {\vdash} ( c {\vdash} a ), &\quad
  c {\vdash} ( a {\vdash} b ), &\quad
  c {\vdash} ( b {\vdash} a ).
  \end{array}
  \]
We rewrite the identities of equation \eqref{alternativedialgebraidentities}
using our basis of $FD_3$ and obtain
  \begin{equation}
  \left\{
  \begin{array}{r}
  ( a \dashv b ) \dashv c + ( b \vdash a ) \dashv c - a \dashv bc - b \vdash ( a \dashv c ) \equiv 0,
  \\
  ( a \dashv b ) \dashv c + ( a \dashv c ) \dashv b - a \dashv bc - a \dashv cb \equiv 0,
  \\
  ( a \vdash b ) \dashv c + ac \vdash b - a \vdash ( b \dashv c ) - a \vdash ( c \vdash b ) \equiv 0.
  \end{array}
  \right.
  \label{ADidentities}
  \end{equation}
Each admits six permutations, giving 18 identities which span the subspace 
of $FD_3$ consisting of the multilinear identities for alternative dialgebras. 
This subspace is the row space of an $18 \times 36$ matrix $A$; 
the rows correspond to identities and the columns to basis monomials. 
(See Figure \ref{AEtable}, with $+$, $-$, $\cdot$ for $1$, $-1$, $0$.)

The linear expansion map $E_3\colon F\!A_3 \to FD_3$ is defined on basis monomials by iteration of 
the dicommutator:
  \begin{align*}
  E_3\colon (ab)c 
  = 
  \langle \langle a,b \rangle ,c\rangle
  &\longmapsto
  ( a \dashv b ) \dashv c - ( b \vdash a ) \dashv c
  -
  c \vdash ( a \dashv b ) + c \vdash ( b \vdash a ),
  \\
  E_3\colon a(bc) 
  = 
  \langle a,\langle b,c\rangle \rangle
  &\longmapsto
  {} - bc \vdash a + cb \vdash a + a \dashv bc - a \dashv cb.
  \end{align*}
It suffices to calculate $E_3$ on one monomial of each association type; 
the other expansions are obtained by permutation.
We represent these expansions as a $12 \times 36$ matrix $E$ in which the
$(i,j)$ entry contains the coefficient of the $j$-th basis monomial of $FD_3$
in the expansion of the $i$-th basis monomial of $F\!A_3$.
(See Figure \ref{AEtable}.)

  \begin{figure}
  \begin{align*}
  A &=
  \left[
  \begin{array}{cccccccccccccccccccccccccccccccccccc}
   + &\nn \cdot &\nn \cdot &\nn \cdot &\nn \cdot &\nn \cdot &\nn
   \cdot &\nn \cdot &\nn + &\nn \cdot &\nn \cdot &\nn \cdot &\nn
   \cdot &\nn \cdot &\nn \cdot &\nn \cdot &\nn \cdot &\nn \cdot &\nn
   - &\nn \cdot &\nn \cdot &\nn \cdot &\nn \cdot &\nn \cdot &\nn
   \cdot &\nn \cdot &\nn - &\nn \cdot &\nn \cdot &\nn \cdot &\nn
   \cdot &\nn \cdot &\nn \cdot &\nn \cdot &\nn \cdot &\nn \cdot \\
   \cdot &\nn + &\nn \cdot &\nn \cdot &\nn \cdot &\nn \cdot &\nn
   \cdot &\nn \cdot &\nn \cdot &\nn \cdot &\nn + &\nn \cdot &\nn
   \cdot &\nn \cdot &\nn \cdot &\nn \cdot &\nn \cdot &\nn \cdot &\nn
   \cdot &\nn - &\nn \cdot &\nn \cdot &\nn \cdot &\nn \cdot &\nn
   \cdot &\nn \cdot &\nn \cdot &\nn \cdot &\nn - &\nn \cdot &\nn
   \cdot &\nn \cdot &\nn \cdot &\nn \cdot &\nn \cdot &\nn \cdot \\
   \cdot &\nn \cdot &\nn + &\nn \cdot &\nn \cdot &\nn \cdot &\nn
   + &\nn \cdot &\nn \cdot &\nn \cdot &\nn \cdot &\nn \cdot &\nn
   \cdot &\nn \cdot &\nn \cdot &\nn \cdot &\nn \cdot &\nn \cdot &\nn
   \cdot &\nn \cdot &\nn - &\nn \cdot &\nn \cdot &\nn \cdot &\nn
   - &\nn \cdot &\nn \cdot &\nn \cdot &\nn \cdot &\nn \cdot &\nn
   \cdot &\nn \cdot &\nn \cdot &\nn \cdot &\nn \cdot &\nn \cdot \\
   \cdot &\nn \cdot &\nn \cdot &\nn + &\nn \cdot &\nn \cdot &\nn
   \cdot &\nn \cdot &\nn \cdot &\nn \cdot &\nn \cdot &\nn + &\nn
   \cdot &\nn \cdot &\nn \cdot &\nn \cdot &\nn \cdot &\nn \cdot &\nn
   \cdot &\nn \cdot &\nn \cdot &\nn - &\nn \cdot &\nn \cdot &\nn
   \cdot &\nn \cdot &\nn \cdot &\nn \cdot &\nn \cdot &\nn - &\nn
   \cdot &\nn \cdot &\nn \cdot &\nn \cdot &\nn \cdot &\nn \cdot \\
   \cdot &\nn \cdot &\nn \cdot &\nn \cdot &\nn + &\nn \cdot &\nn
   \cdot &\nn + &\nn \cdot &\nn \cdot &\nn \cdot &\nn \cdot &\nn
   \cdot &\nn \cdot &\nn \cdot &\nn \cdot &\nn \cdot &\nn \cdot &\nn
   \cdot &\nn \cdot &\nn \cdot &\nn \cdot &\nn - &\nn \cdot &\nn
   \cdot &\nn - &\nn \cdot &\nn \cdot &\nn \cdot &\nn \cdot &\nn
   \cdot &\nn \cdot &\nn \cdot &\nn \cdot &\nn \cdot &\nn \cdot \\
   \cdot &\nn \cdot &\nn \cdot &\nn \cdot &\nn \cdot &\nn + &\nn
   \cdot &\nn \cdot &\nn \cdot &\nn + &\nn \cdot &\nn \cdot &\nn
   \cdot &\nn \cdot &\nn \cdot &\nn \cdot &\nn \cdot &\nn \cdot &\nn
   \cdot &\nn \cdot &\nn \cdot &\nn \cdot &\nn \cdot &\nn - &\nn
   \cdot &\nn \cdot &\nn \cdot &\nn - &\nn \cdot &\nn \cdot &\nn
   \cdot &\nn \cdot &\nn \cdot &\nn \cdot &\nn \cdot &\nn \cdot \\
   + &\nn + &\nn \cdot &\nn \cdot &\nn \cdot &\nn \cdot &\nn
   \cdot &\nn \cdot &\nn \cdot &\nn \cdot &\nn \cdot &\nn \cdot &\nn
   \cdot &\nn \cdot &\nn \cdot &\nn \cdot &\nn \cdot &\nn \cdot &\nn
   - &\nn - &\nn \cdot &\nn \cdot &\nn \cdot &\nn \cdot &\nn
   \cdot &\nn \cdot &\nn \cdot &\nn \cdot &\nn \cdot &\nn \cdot &\nn
   \cdot &\nn \cdot &\nn \cdot &\nn \cdot &\nn \cdot &\nn \cdot \\
   + &\nn + &\nn \cdot &\nn \cdot &\nn \cdot &\nn \cdot &\nn
   \cdot &\nn \cdot &\nn \cdot &\nn \cdot &\nn \cdot &\nn \cdot &\nn
   \cdot &\nn \cdot &\nn \cdot &\nn \cdot &\nn \cdot &\nn \cdot &\nn
   - &\nn - &\nn \cdot &\nn \cdot &\nn \cdot &\nn \cdot &\nn
   \cdot &\nn \cdot &\nn \cdot &\nn \cdot &\nn \cdot &\nn \cdot &\nn
   \cdot &\nn \cdot &\nn \cdot &\nn \cdot &\nn \cdot &\nn \cdot \\
   \cdot &\nn \cdot &\nn + &\nn + &\nn \cdot &\nn \cdot &\nn
   \cdot &\nn \cdot &\nn \cdot &\nn \cdot &\nn \cdot &\nn \cdot &\nn
   \cdot &\nn \cdot &\nn \cdot &\nn \cdot &\nn \cdot &\nn \cdot &\nn
   \cdot &\nn \cdot &\nn - &\nn - &\nn \cdot &\nn \cdot &\nn
   \cdot &\nn \cdot &\nn \cdot &\nn \cdot &\nn \cdot &\nn \cdot &\nn
   \cdot &\nn \cdot &\nn \cdot &\nn \cdot &\nn \cdot &\nn \cdot \\
   \cdot &\nn \cdot &\nn + &\nn + &\nn \cdot &\nn \cdot &\nn
   \cdot &\nn \cdot &\nn \cdot &\nn \cdot &\nn \cdot &\nn \cdot &\nn
   \cdot &\nn \cdot &\nn \cdot &\nn \cdot &\nn \cdot &\nn \cdot &\nn
   \cdot &\nn \cdot &\nn - &\nn - &\nn \cdot &\nn \cdot &\nn
   \cdot &\nn \cdot &\nn \cdot &\nn \cdot &\nn \cdot &\nn \cdot &\nn
   \cdot &\nn \cdot &\nn \cdot &\nn \cdot &\nn \cdot &\nn \cdot \\
   \cdot &\nn \cdot &\nn \cdot &\nn \cdot &\nn + &\nn + &\nn
   \cdot &\nn \cdot &\nn \cdot &\nn \cdot &\nn \cdot &\nn \cdot &\nn
   \cdot &\nn \cdot &\nn \cdot &\nn \cdot &\nn \cdot &\nn \cdot &\nn
   \cdot &\nn \cdot &\nn \cdot &\nn \cdot &\nn - &\nn - &\nn
   \cdot &\nn \cdot &\nn \cdot &\nn \cdot &\nn \cdot &\nn \cdot &\nn
   \cdot &\nn \cdot &\nn \cdot &\nn \cdot &\nn \cdot &\nn \cdot \\
   \cdot &\nn \cdot &\nn \cdot &\nn \cdot &\nn + &\nn + &\nn
   \cdot &\nn \cdot &\nn \cdot &\nn \cdot &\nn \cdot &\nn \cdot &\nn
   \cdot &\nn \cdot &\nn \cdot &\nn \cdot &\nn \cdot &\nn \cdot &\nn
   \cdot &\nn \cdot &\nn \cdot &\nn \cdot &\nn - &\nn - &\nn
   \cdot &\nn \cdot &\nn \cdot &\nn \cdot &\nn \cdot &\nn \cdot &\nn
   \cdot &\nn \cdot &\nn \cdot &\nn \cdot &\nn \cdot &\nn \cdot \\
   \cdot &\nn \cdot &\nn \cdot &\nn \cdot &\nn \cdot &\nn \cdot &\nn
   + &\nn \cdot &\nn \cdot &\nn \cdot &\nn \cdot &\nn \cdot &\nn
   \cdot &\nn + &\nn \cdot &\nn \cdot &\nn \cdot &\nn \cdot &\nn
   \cdot &\nn \cdot &\nn \cdot &\nn \cdot &\nn \cdot &\nn \cdot &\nn
   - &\nn \cdot &\nn \cdot &\nn \cdot &\nn \cdot &\nn \cdot &\nn
   \cdot &\nn - &\nn \cdot &\nn \cdot &\nn \cdot &\nn \cdot \\
   \cdot &\nn \cdot &\nn \cdot &\nn \cdot &\nn \cdot &\nn \cdot &\nn
   \cdot &\nn + &\nn \cdot &\nn \cdot &\nn \cdot &\nn \cdot &\nn
   + &\nn \cdot &\nn \cdot &\nn \cdot &\nn \cdot &\nn \cdot &\nn
   \cdot &\nn \cdot &\nn \cdot &\nn \cdot &\nn \cdot &\nn \cdot &\nn
   \cdot &\nn - &\nn \cdot &\nn \cdot &\nn \cdot &\nn \cdot &\nn
   - &\nn \cdot &\nn \cdot &\nn \cdot &\nn \cdot &\nn \cdot \\
   \cdot &\nn \cdot &\nn \cdot &\nn \cdot &\nn \cdot &\nn \cdot &\nn
   \cdot &\nn \cdot &\nn + &\nn \cdot &\nn \cdot &\nn \cdot &\nn
   \cdot &\nn \cdot &\nn \cdot &\nn + &\nn \cdot &\nn \cdot &\nn
   \cdot &\nn \cdot &\nn \cdot &\nn \cdot &\nn \cdot &\nn \cdot &\nn
   \cdot &\nn \cdot &\nn - &\nn \cdot &\nn \cdot &\nn \cdot &\nn
   \cdot &\nn \cdot &\nn \cdot &\nn - &\nn \cdot &\nn \cdot \\
   \cdot &\nn \cdot &\nn \cdot &\nn \cdot &\nn \cdot &\nn \cdot &\nn
   \cdot &\nn \cdot &\nn \cdot &\nn + &\nn \cdot &\nn \cdot &\nn
   \cdot &\nn \cdot &\nn + &\nn \cdot &\nn \cdot &\nn \cdot &\nn
   \cdot &\nn \cdot &\nn \cdot &\nn \cdot &\nn \cdot &\nn \cdot &\nn
   \cdot &\nn \cdot &\nn \cdot &\nn - &\nn \cdot &\nn \cdot &\nn
   \cdot &\nn \cdot &\nn - &\nn \cdot &\nn \cdot &\nn \cdot \\
   \cdot &\nn \cdot &\nn \cdot &\nn \cdot &\nn \cdot &\nn \cdot &\nn
   \cdot &\nn \cdot &\nn \cdot &\nn \cdot &\nn + &\nn \cdot &\nn
   \cdot &\nn \cdot &\nn \cdot &\nn \cdot &\nn \cdot &\nn + &\nn
   \cdot &\nn \cdot &\nn \cdot &\nn \cdot &\nn \cdot &\nn \cdot &\nn
   \cdot &\nn \cdot &\nn \cdot &\nn \cdot &\nn - &\nn \cdot &\nn
   \cdot &\nn \cdot &\nn \cdot &\nn \cdot &\nn \cdot &\nn - \\
   \cdot &\nn \cdot &\nn \cdot &\nn \cdot &\nn \cdot &\nn \cdot &\nn
   \cdot &\nn \cdot &\nn \cdot &\nn \cdot &\nn \cdot &\nn + &\nn
   \cdot &\nn \cdot &\nn \cdot &\nn \cdot &\nn + &\nn \cdot &\nn
   \cdot &\nn \cdot &\nn \cdot &\nn \cdot &\nn \cdot &\nn \cdot &\nn
   \cdot &\nn \cdot &\nn \cdot &\nn \cdot &\nn \cdot &\nn - &\nn
   \cdot &\nn \cdot &\nn \cdot &\nn \cdot &\nn - &\nn \cdot   \end{array}
  \right]
  \\
  \\
  E &=
  \left[
  \begin{array}{cccccccccccccccccccccccccccccccccccc}
   + &\nn \cdot &\nn \cdot &\nn \cdot &\nn \cdot &\nn \cdot &\nn
   \cdot &\nn \cdot &\nn - &\nn \cdot &\nn \cdot &\nn \cdot &\nn
   \cdot &\nn \cdot &\nn \cdot &\nn \cdot &\nn \cdot &\nn \cdot &\nn
   \cdot &\nn \cdot &\nn \cdot &\nn \cdot &\nn \cdot &\nn \cdot &\nn
   \cdot &\nn \cdot &\nn \cdot &\nn \cdot &\nn - &\nn \cdot &\nn
   \cdot &\nn \cdot &\nn \cdot &\nn \cdot &\nn \cdot &\nn + \\
   \cdot &\nn + &\nn \cdot &\nn \cdot &\nn \cdot &\nn \cdot &\nn
   \cdot &\nn \cdot &\nn \cdot &\nn \cdot &\nn - &\nn \cdot &\nn
   \cdot &\nn \cdot &\nn \cdot &\nn \cdot &\nn \cdot &\nn \cdot &\nn
   \cdot &\nn \cdot &\nn \cdot &\nn \cdot &\nn \cdot &\nn \cdot &\nn
   \cdot &\nn \cdot &\nn - &\nn \cdot &\nn \cdot &\nn \cdot &\nn
   \cdot &\nn \cdot &\nn \cdot &\nn + &\nn \cdot &\nn \cdot \\
   \cdot &\nn \cdot &\nn + &\nn \cdot &\nn \cdot &\nn \cdot &\nn
   - &\nn \cdot &\nn \cdot &\nn \cdot &\nn \cdot &\nn \cdot &\nn
   \cdot &\nn \cdot &\nn \cdot &\nn \cdot &\nn \cdot &\nn \cdot &\nn
   \cdot &\nn \cdot &\nn \cdot &\nn \cdot &\nn \cdot &\nn \cdot &\nn
   \cdot &\nn \cdot &\nn \cdot &\nn \cdot &\nn \cdot &\nn - &\nn
   \cdot &\nn \cdot &\nn \cdot &\nn \cdot &\nn + &\nn \cdot \\
   \cdot &\nn \cdot &\nn \cdot &\nn + &\nn \cdot &\nn \cdot &\nn
   \cdot &\nn \cdot &\nn \cdot &\nn \cdot &\nn \cdot &\nn - &\nn
   \cdot &\nn \cdot &\nn \cdot &\nn \cdot &\nn \cdot &\nn \cdot &\nn
   \cdot &\nn \cdot &\nn \cdot &\nn \cdot &\nn \cdot &\nn \cdot &\nn
   - &\nn \cdot &\nn \cdot &\nn \cdot &\nn \cdot &\nn \cdot &\nn
   \cdot &\nn + &\nn \cdot &\nn \cdot &\nn \cdot &\nn \cdot \\
   \cdot &\nn \cdot &\nn \cdot &\nn \cdot &\nn + &\nn \cdot &\nn
   \cdot &\nn - &\nn \cdot &\nn \cdot &\nn \cdot &\nn \cdot &\nn
   \cdot &\nn \cdot &\nn \cdot &\nn \cdot &\nn \cdot &\nn \cdot &\nn
   \cdot &\nn \cdot &\nn \cdot &\nn \cdot &\nn \cdot &\nn \cdot &\nn
   \cdot &\nn \cdot &\nn \cdot &\nn - &\nn \cdot &\nn \cdot &\nn
   \cdot &\nn \cdot &\nn + &\nn \cdot &\nn \cdot &\nn \cdot \\
   \cdot &\nn \cdot &\nn \cdot &\nn \cdot &\nn \cdot &\nn + &\nn
   \cdot &\nn \cdot &\nn \cdot &\nn - &\nn \cdot &\nn \cdot &\nn
   \cdot &\nn \cdot &\nn \cdot &\nn \cdot &\nn \cdot &\nn \cdot &\nn
   \cdot &\nn \cdot &\nn \cdot &\nn \cdot &\nn \cdot &\nn \cdot &\nn
   \cdot &\nn - &\nn \cdot &\nn \cdot &\nn \cdot &\nn \cdot &\nn
   + &\nn \cdot &\nn \cdot &\nn \cdot &\nn \cdot &\nn \cdot \\
   \cdot &\nn \cdot &\nn \cdot &\nn \cdot &\nn \cdot &\nn \cdot &\nn
   \cdot &\nn \cdot &\nn \cdot &\nn \cdot &\nn \cdot &\nn \cdot &\nn
   \cdot &\nn \cdot &\nn \cdot &\nn - &\nn \cdot &\nn + &\nn
   + &\nn - &\nn \cdot &\nn \cdot &\nn \cdot &\nn \cdot &\nn
   \cdot &\nn \cdot &\nn \cdot &\nn \cdot &\nn \cdot &\nn \cdot &\nn
   \cdot &\nn \cdot &\nn \cdot &\nn \cdot &\nn \cdot &\nn \cdot \\
   \cdot &\nn \cdot &\nn \cdot &\nn \cdot &\nn \cdot &\nn \cdot &\nn
   \cdot &\nn \cdot &\nn \cdot &\nn \cdot &\nn \cdot &\nn \cdot &\nn
   \cdot &\nn \cdot &\nn \cdot &\nn + &\nn \cdot &\nn - &\nn
   - &\nn + &\nn \cdot &\nn \cdot &\nn \cdot &\nn \cdot &\nn
   \cdot &\nn \cdot &\nn \cdot &\nn \cdot &\nn \cdot &\nn \cdot &\nn
   \cdot &\nn \cdot &\nn \cdot &\nn \cdot &\nn \cdot &\nn \cdot \\
   \cdot &\nn \cdot &\nn \cdot &\nn \cdot &\nn \cdot &\nn \cdot &\nn
   \cdot &\nn \cdot &\nn \cdot &\nn \cdot &\nn \cdot &\nn \cdot &\nn
   \cdot &\nn - &\nn \cdot &\nn \cdot &\nn + &\nn \cdot &\nn
   \cdot &\nn \cdot &\nn + &\nn - &\nn \cdot &\nn \cdot &\nn
   \cdot &\nn \cdot &\nn \cdot &\nn \cdot &\nn \cdot &\nn \cdot &\nn
   \cdot &\nn \cdot &\nn \cdot &\nn \cdot &\nn \cdot &\nn \cdot \\
   \cdot &\nn \cdot &\nn \cdot &\nn \cdot &\nn \cdot &\nn \cdot &\nn
   \cdot &\nn \cdot &\nn \cdot &\nn \cdot &\nn \cdot &\nn \cdot &\nn
   \cdot &\nn + &\nn \cdot &\nn \cdot &\nn - &\nn \cdot &\nn
   \cdot &\nn \cdot &\nn - &\nn + &\nn \cdot &\nn \cdot &\nn
   \cdot &\nn \cdot &\nn \cdot &\nn \cdot &\nn \cdot &\nn \cdot &\nn
   \cdot &\nn \cdot &\nn \cdot &\nn \cdot &\nn \cdot &\nn \cdot \\
   \cdot &\nn \cdot &\nn \cdot &\nn \cdot &\nn \cdot &\nn \cdot &\nn
   \cdot &\nn \cdot &\nn \cdot &\nn \cdot &\nn \cdot &\nn \cdot &\nn
   - &\nn \cdot &\nn + &\nn \cdot &\nn \cdot &\nn \cdot &\nn
   \cdot &\nn \cdot &\nn \cdot &\nn \cdot &\nn + &\nn - &\nn
   \cdot &\nn \cdot &\nn \cdot &\nn \cdot &\nn \cdot &\nn \cdot &\nn
   \cdot &\nn \cdot &\nn \cdot &\nn \cdot &\nn \cdot &\nn \cdot \\
   \cdot &\nn \cdot &\nn \cdot &\nn \cdot &\nn \cdot &\nn \cdot &\nn
   \cdot &\nn \cdot &\nn \cdot &\nn \cdot &\nn \cdot &\nn \cdot &\nn
   + &\nn \cdot &\nn - &\nn \cdot &\nn \cdot &\nn \cdot &\nn
   \cdot &\nn \cdot &\nn \cdot &\nn \cdot &\nn - &\nn + &\nn
   \cdot &\nn \cdot &\nn \cdot &\nn \cdot &\nn \cdot &\nn \cdot &\nn
   \cdot &\nn \cdot &\nn \cdot &\nn \cdot &\nn \cdot &\nn .
  \end{array}
  \right]
  \end{align*}
  \caption{The matrices $A$ and $E$ from the proof of Proposition \ref{degree3proposition}}
  \label{AEtable}
  \end{figure}
    
We construct a $30 \times 48$ matrix $M$; columns 1--36 correspond to the basis monomials of $FD_3$, 
and columns 37--48 to the basis monomials of $F\!A_3$.
The upper left $18 \times 36$ block is $A$;
the lower left $12 \times 36$ block is $E$;
the upper right $18 \times 12$ block is the zero matrix $O$;
the lower right $12 \times 12$ block is the identity matrix $I$:
  \begin{equation}
  \label{blockmatrix}
  M =
  \left[
  \begin{array}{r|r}
  A & O \\
  E & I
  \end{array}
  \right].
  \end{equation}
Any row of the row canonical form $\mathrm{RCF}(M)$ which has its leading 1 to the right of the vertical line 
represents a polynomial identity in $F\!A_3$ satisfied by the dicommutator in every alternative dialgebra.
These rows represent dependence relations among the basis monomials of $F\!A_3$ resulting from dependence relations, 
implied by the alternative dialgebra identities, among their expansions in $FD_3$.  
A calculation with the computer algebra system Maple shows that there are three such rows, which 
represent the three permutations of right anticommutativity: 
  \[
  a(bc) + a(cb), \qquad
  b(ac) + b(ca), \qquad
  c(ab) + c(ba).
  \] 
The restriction on the characteristic is required by the fact that 6 is the least common multiple of
the denominators of the nonzero entries of $\mathrm{RCF}(M)$.
\end{proof}

\begin{theorem} \label{degree4proposition}
Over a field of characteristic $\ne 2, 3$, every multilinear polynomial
identity in degree 4 satisfied by the dicommutator in every alternative
dialgebra is a consequence of right anticommutativity and the di-Malcev identity.
\end{theorem}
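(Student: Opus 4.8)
The plan is to mimic the degree-3 computation of Proposition \ref{degree3proposition}, scaled up to degree 4. First I would fix bases: $F\!A_4$ has $K_4 = 5$ association types and $4! = 24$ permutations, so $\dim F\!A_4 = 120$; for the free $0$-dialgebra, Lemmas \ref{barsidelemma}--\ref{dialgebradimensionlemma} give $Z_4 = 4 K_4 = 20$ dialgebra association types and $\dim FD_4 = 24 \cdot 20 = 480$. I would enumerate these association types explicitly (each is $x \dashv y$ or $y \vdash x$ with $x$ a $0$-dialgebra type and $y$ an algebra type of complementary degree) and order the $480$ basis monomials lexicographically.

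Next I would build the analogue of the matrix $M$ from \eqref{blockmatrix}. The top block encodes the alternative-dialgebra identities in degree 4: take the defining identities \eqref{alternativedialgebraidentities} (equivalently Definition \ref{definitionalternativedialgebras}), multiply each on the left and on the right by an extra variable to lift them to degree 4, reduce every resulting monomial to the chosen $FD_4$ basis using the bar identities, and apply all $24$ permutations of $\{a,b,c,d\}$; the row space of this block is the $S_4$-submodule of $FD_4$ generated by the consequences of the alternative-dialgebra identities. The bottom block is the expansion map $E_4 \colon F\!A_4 \to FD_4$ obtained by iterating the dicommutator $\langle a,b\rangle = a \dashv b - b \vdash a$ on each of the $5$ association types (the other $115$ expansions follow by permutation), placed alongside an identity block in the right-hand columns as in \eqref{blockmatrix}. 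Then I would compute $\mathrm{RCF}(M)$ over $\mathbb{Q}$: its rows whose leading $1$ lies in the right-hand ($F\!A_4$) columns span the space $\mathcal{S}$ of all multilinear degree-4 identities satisfied by the dicommutator in every alternative dialgebra.

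The final step is to show $\mathcal{S}$ is exactly the space of degree-4 consequences of right anticommutativity and the di-Malcev identity. For this I would separately compute the $S_4$-submodule $\mathcal{T} \subseteq F\!A_4$ generated by all substitution instances and permutations of right anticommutativity $a(bc)+a(cb)$ (lifted to degree 4 in the two ways: $(a(bc))d + (a(cb))d$ and $a((bc)d)+a((cb)d)$ — every consequence of right anticommutativity in degree $4$ is a linear combination of these lifted relations and their permutations, since right anticommutativity has degree $3$; at degree $4$ one may also need the relation read inside each of the $5$ association types, and I would include all of them) together with the $24$ permutations of the di-Malcev identity of Definition \ref{MDdefinition}. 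Row-reducing the spanning set of $\mathcal{T}$ gives a matrix whose row space I compare with $\mathcal{S}$: the inclusion $\mathcal{T} \subseteq \mathcal{S}$ is automatic once we check that the di-Malcev identity itself expands to zero under $E_4$ modulo the alternative-dialgebra identities (this is the content of Section \ref{sectionMalcevdialgebras}'s derivation run in reverse, but I would verify it computationally as a consistency check), and the reverse inclusion $\dim \mathcal{S} \le \dim \mathcal{T}$ is what the rank comparison establishes. Equality of the two row spaces then proves the theorem. As in Proposition \ref{degree3proposition}, the characteristic restriction $\ne 2, 3$ is read off as the least common multiple of the denominators appearing in $\mathrm{RCF}(M)$.

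The main obstacle is purely computational scale and bookkeeping: organizing the $480$-dimensional $FD_4$ correctly — in particular getting the bar-identity reductions right so that each degree-4 dialgebra monomial is unambiguously normalized — and ensuring the lifting of the alternative-dialgebra and Malcev-dialgebra identities to degree $4$ captures \emph{all} degree-4 consequences (multiplying on both sides by a new variable, and substituting into each association type, but no genuinely new phenomena in this small degree). Once the matrices are assembled, the linear algebra is routine for a computer algebra system; the conceptual work is entirely in the setup and in trusting that the two row-space computations (for $\mathcal{S}$ and for $\mathcal{T}$) are over a common, consistently ordered basis of $F\!A_4$.
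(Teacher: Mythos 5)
Your overall strategy is the same as the paper's: assemble a block matrix whose top block spans the degree-4 multilinear identities of alternative dialgebras in $FD_4$, whose bottom block is the expansion map by the dicommutator with an identity block on the right, read off the rows of the row canonical form with leading $1$s in the right-hand columns, and then check that the resulting space coincides with the span of the permutations of the di-Malcev identity together with the consequences of right anticommutativity. Working in the full $F\!A_4$ ($5$ types, $120$ monomials) instead of the paper's free right anticommutative algebra $FRA_4$ ($60$ monomials, with right anticommutativity and the skew-symmetries of \eqref{skewsymmetries} built in) is a harmless variation; it only doubles the right-hand block and means right anticommutativity reappears among the identities you find.

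There is, however, one genuine gap: your top block does not span the full degree-4 multilinear component of the T-ideal generated by the alternative-dialgebra identities. You propose to lift each degree-3 identity $P(a,b,c)$ only by multiplying on the left and right by a new variable (and permuting). This produces only the ideal-type consequences $P(a,b,c)\dashv d$, $P(a,b,c)\vdash d$, $d\dashv P(a,b,c)$, $d\vdash P(a,b,c)$; it omits the six substitution consequences $P(a\dashv d,b,c)$, $P(a\vdash d,b,c)$, $P(a,b\dashv d,c)$, $P(a,b\vdash d,c)$, $P(a,b,c\dashv d)$, $P(a,b,c\vdash d)$, which are needed to generate all degree-4 identities holding in every alternative dialgebra (the paper uses all $10$ consequences per identity, hence $720$ rows). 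Permutations do not repair this, since permuting a product of the identity with a new variable never produces a substitution instance. With a too-small top block, the space of elements of $F\!A_4$ whose expansion lies in its row space may be a proper subspace of the true space of dicommutator identities; showing that this smaller space equals the span of right anticommutativity and the di-Malcev consequences would not prove that \emph{every} degree-4 dicommutator identity follows from them. The fix is simply to include the substitution liftings (and, on the binary side, to be equally systematic about the degree-4 consequences of right anticommutativity: substitutions into each variable as well as left and right multiplications, not just the two lifts you single out). With that correction your computation is exactly the paper's.
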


\begin{proof}
There are 20 dialgebra association types in degree 4:
  \allowdisplaybreaks
  \begin{alignat*}{4}
  &((a \dashv b) \dashv c) \dashv d,
  &\quad
  &((a \vdash b) \dashv c) \dashv d,
  &\quad
  &(ab \vdash c) \dashv d,
  &\quad
  &(a \dashv bc) \dashv d,
  \\
  &(a \vdash (b \dashv c)) \dashv d,
  &\quad
  &(a \vdash (b \vdash c)) \dashv d,
  &\quad
  &(ab)c \vdash d,
  &\quad
  &a (bc) \vdash d,
  \\
  &(a \dashv b) \dashv cd,
  &\quad
  &(a \vdash b) \dashv cd,
  &\quad
  &ab \vdash (c \dashv d),
  &\quad
  &ab \vdash (c \vdash d),
  \\
  &a \dashv (bc)d,
  &\quad
  &a \dashv b (cd),
  &\quad
  &a \vdash ((b \dashv c) \dashv d),
  &\quad
  &a \vdash ((b \vdash c) \dashv d),
  \\
  &a \vdash (bc \vdash d),
  &\quad
  &a \vdash (b \dashv cd),
  &\quad
  &a \vdash (b \vdash (c \dashv d)),
  &\quad
  &a \vdash (b \vdash (c \vdash d)).
  \end{alignat*}
Each identity $P(a,b,c)$ for alternative dialgebras 
has 10 consequences in degree 4:
  \begin{alignat*}{5}
  &P( a \vdash d, b, c ),
  &\quad
  &P( a \dashv d, b, c ),
  &\quad
  &P( a, b \vdash d, c ),
  &\quad
  &P( a, b \dashv d, c ),
  &\quad
  &P( a, b, c \vdash d ),
  \\
  &P( a, b, c \dashv d ),
  &\quad
  &P( a, b, c ) \dashv d,
  &\quad
  &P( a, b, c ) \vdash d,
  &\quad
  &d \dashv P( a, b, c ),
  &\quad
  &d \vdash P( a, b, c ).
  \end{alignat*}
We obtain the 30 identities in Table \ref{alternative4}; 
each admits 24 permutations, giving 720 elements of $FD_4$ 
which span the subspace of multilinear identities satisfied by every alternative dialgebra. 
This subspace is the row space of a $720 \times 480$ matrix $A$.

A right anticommutative operation has four association types in degree 4:
  \[
  ((ab)c)d, 
  \quad
  (a(bc))d, 
  \quad
  (ab)(cd),
  \quad
  a((bc)d),
  \] 
since $a(b(cd)) = - a((cd)b)$.  Types 2, 3 and 4 have these skew-symmetries:
  \begin{equation}
  \label{skewsymmetries}
  ( a ( c b ) ) d = - ( a ( b c ) ) d, \quad
  ( a b ) ( d c ) = - ( a b ) ( c d ), \quad
  a ( ( c b ) d ) = - a ( ( b c ) d ).
  \end{equation}
Each skew-symmetry halves the number of multilinear monomials, giving the 60 monomials of 
Table \ref{FRA4basis} which form an ordered basis of $FRA_4$, the multilinear subspace 
of degree 4 in the free right anticommutative algebra on four generators.

  \begin{table}
  \[
  \begin{array}{l}
    ( ( a \vdash d ) \dashv b ) \dashv c
  + ( b \vdash ( a \vdash d ) ) \dashv c
  - ( a \vdash d ) \dashv bc
  - b \vdash ( ( a \vdash d ) \dashv c ),
  \\
    ( ( a \dashv d ) \dashv b ) \dashv c
  + ( b \vdash ( a \dashv d ) ) \dashv c
  - ( a \dashv d ) \dashv bc
  - b \vdash ( ( a \dashv d ) \dashv c ),
  \\
    ( a \dashv bd ) \dashv c
  + ( bd \vdash a ) \dashv c
  - a \dashv ( bd ) c
  - bd \vdash ( a \dashv c ),
  \\
    ( a \dashv bd ) \dashv c
  + ( bd \vdash a ) \dashv c
  - a \dashv ( bd ) c
  - bd \vdash ( a \dashv c ),
  \\
    ( a \dashv b ) \dashv cd
  + ( b \vdash a ) \dashv cd
  - a \dashv b ( cd )
  - b \vdash ( a \dashv cd ),
  \\
    ( a \dashv b ) \dashv cd
  + ( b \vdash a ) \dashv cd
  - a \dashv b ( cd )
  - b \vdash ( a \dashv cd ),
  \\
    ( ( a \dashv b ) \dashv c ) \dashv d
  + ( ( b \vdash a ) \dashv c ) \dashv d
  - ( a \dashv bc ) \dashv d
  - ( b \vdash ( a \dashv c ) ) \dashv d,
  \\
    ( ab ) c \vdash d
  + ( ba ) c \vdash d
  - a ( bc ) \vdash d
  - b ( ac ) \vdash d,
  \\
    d \dashv ( ab ) c
  + d \dashv ( ba ) c
  - d \dashv a ( bc )
  - d \dashv b ( ac ),
  \\
    d \vdash ( ( a \dashv b ) \dashv c )
  + d \vdash ( ( b \vdash a ) \dashv c )
  - d \vdash ( a \dashv bc )
  - d \vdash ( b \vdash ( a \dashv c ) ),
  \\
    ( ( a \vdash d ) \dashv b ) \dashv c
  + ( ( a \vdash d ) \dashv c ) \dashv b
  - ( a \vdash d ) \dashv bc
  - ( a \vdash d ) \dashv cb,
  \\
    ( ( a \dashv d ) \dashv b ) \dashv c
  + ( ( a \dashv d ) \dashv c ) \dashv b
  - ( a \dashv d ) \dashv bc
  - ( a \dashv d ) \dashv cb,
  \\
    ( a \dashv bd ) \dashv c
  + ( a \dashv c ) \dashv bd
  - a \dashv ( bd ) c
  - a \dashv c ( bd ),
  \\
    ( a \dashv bd ) \dashv c
  + ( a \dashv c ) \dashv bd
  - a \dashv ( bd ) c
  - a \dashv c ( bd ),
  \\
    ( a \dashv b ) \dashv cd
  + ( a \dashv cd ) \dashv b
  - a \dashv b ( cd )
  - a \dashv ( cd ) b,
  \\
    ( a \dashv b ) \dashv cd
  + ( a \dashv cd ) \dashv b
  - a \dashv b ( cd )
  - a \dashv ( cd ) b,
  \\
    ( ( a \dashv b ) \dashv c ) \dashv d
  + ( ( a \dashv c ) \dashv b ) \dashv d
  - ( a \dashv bc ) \dashv d
  - ( a \dashv cb ) \dashv d,
  \\
    ( ab ) c \vdash d
  + ( ac ) b \vdash d
  - a ( bc ) \vdash d
  - a ( cb ) \vdash d,
  \\
    d \dashv ( ab ) c
  + d \dashv ( ac ) b
  - d \dashv a ( bc )
  - d \dashv a ( cb ),
  \\
    d \vdash ( ( a \dashv b ) \dashv c )
  + d \vdash ( ( a \dashv c ) \dashv b )
  - d \vdash ( a \dashv bc )
  - d \vdash ( a \dashv cb ),
  \\
    ( ad \vdash b ) \dashv c
  + ( ad ) c \vdash b
  - ad \vdash ( b \dashv c )
  - ad \vdash ( c \vdash b ),
  \\
    ( ad \vdash b ) \dashv c
  + ( ad ) c \vdash b
  - ad \vdash ( b \dashv c )
  - ad \vdash ( c \vdash b ),
  \\
    ( a \vdash ( b \vdash d ) ) \dashv c
  + ac \vdash ( b \vdash d )
  - a \vdash ( ( b \vdash d ) \dashv c )
  - a \vdash ( c \vdash ( b \vdash d ) ),
  \\
    ( a \vdash ( b \dashv d ) ) \dashv c
  + ac \vdash ( b \dashv d )
  - a \vdash ( ( b \dashv d ) \dashv c )
  - a \vdash ( c \vdash ( b \dashv d ) ),
  \\
    ( a \vdash b ) \dashv cd
  + a ( cd ) \vdash b
  - a \vdash ( b \dashv cd )
  - a \vdash ( cd \vdash b ),
  \\
    ( a \vdash b ) \dashv cd
  + a ( cd ) \vdash b
  - a \vdash ( b \dashv cd )
  - a \vdash ( cd \vdash b ),
  \\
    ( ( a \vdash b ) \dashv c ) \dashv d
  + ( ac \vdash b ) \dashv d
  - ( a \vdash ( b \dashv c ) ) \dashv d
  - ( a \vdash ( c \vdash b ) ) \dashv d,
  \\
    ( ab ) c \vdash d
  + ( ac ) b \vdash d
  - a ( bc ) \vdash d
  - a ( cb ) \vdash d,
  \\
    d \dashv ( ab ) c
  + d \dashv ( ac ) b
  - d \dashv a ( bc )
  - d \dashv a ( cb ),
  \\
    d \vdash ( ( a \vdash b ) \dashv c )
  + d \vdash ( ac \vdash b )
  - d \vdash ( a \vdash ( b \dashv c ) )
  - d \vdash ( a \vdash ( c \vdash b ) ).
  \end{array}
  \]
  \caption{Alternative dialgebra identities in degree 4}
  \label{alternative4}
  \end{table}

  \begin{table}
  \[
  \begin{array}{llllll}
  ((ab)c)d, &\quad 
  ((ab)d)c, &\quad 
  ((ac)b)d, &\quad 
  ((ac)d)b, &\quad
  ((ad)b)c, &\quad 
  ((ad)c)b,
  \\
  ((ba)c)d, &\quad 
  ((ba)d)c, &\quad
  ((bc)a)d, &\quad 
  ((bc)d)a, &\quad 
  ((bd)a)c, &\quad 
  ((bd)c)a, 
  \\
  ((ca)b)d, &\quad 
  ((ca)d)b, &\quad 
  ((cb)a)d, &\quad 
  ((cb)d)a, &\quad
  ((cd)a)b, &\quad 
  ((cd)b)a,  
  \\
  ((da)b)c, &\quad 
  ((da)c)b, &\quad
  ((db)a)c, &\quad 
  ((db)c)a, &\quad 
  ((dc)a)b, &\quad 
  ((dc)b)a, 
  \\
  (a(bc))d, &\quad 
  (a(bd))c, &\quad 
  (a(cd))b, &\quad 
  (b(ac))d, &\quad
  (b(ad))c, &\quad 
  (b(cd))a,  
  \\
  (c(ab))d, &\quad 
  (c(ad))b, &\quad
  (c(bd))a, &\quad 
  (d(ab))c, &\quad 
  (d(ac))b, &\quad 
  (d(bc))a, 
  \\
  (ab)(cd), &\quad 
  (ac)(bd), &\quad 
  (ad)(bc), &\quad 
  (ba)(cd), &\quad
  (bc)(ad), &\quad 
  (bd)(ac),  
  \\
  (ca)(bd), &\quad 
  (cb)(ad), &\quad
  (cd)(ab), &\quad 
  (da)(bc), &\quad 
  (db)(ac), &\quad 
  (dc)(ab), 
  \\
  a((bc)d), &\quad 
  a((bd)c), &\quad 
  a((cd)b), &\quad 
  b((ac)d), &\quad
  b((ad)c), &\quad 
  b((cd)a),  
  \\
  c((ab)d), &\quad 
  c((ad)b), &\quad
  c((bd)a), &\quad 
  d((ab)c), &\quad 
  d((ac)b), &\quad 
  d((bc)a).
  \end{array}
  \]
  \caption{Right anticommutative monomials in degree 4}
  \label{FRA4basis}
  \end{table}

The expansion map $E_4\colon FRA_4 \to FD_4$ is defined on basis monomials 
by iteration of the dicommutator. 
The result of applying $E_4$ to the first basis monomial in each association type
is displayed in Table \ref{E4results}.
The action of $E_4$ on the other basis monomials can be obtained by permutation. 
We represent these expansions as a $60 \times 480$ matrix $E$ in which
the $(i,j)$ entry contains the coefficient of the $j$-th basis monomial of
$FD_4$ in the expansion of the $i$-th basis monomial of $FRA_4$.

  \begin{table}
  \begin{alignat*}{1}
  E_4\colon 
  &
  ( ( a b ) c ) d
  =
  \langle \langle \langle a, b \rangle, c \rangle, d \rangle
  \longmapsto
  \\
  &
  ( ( a \dashv b ) \dashv c ) \dashv d
  -
  ( ( b \vdash a ) \dashv c ) \dashv d
  -
  ( c \vdash ( a \dashv b ) ) \dashv d
  +
  ( c \vdash ( b \vdash a ) ) \dashv d
  \\
  &
  -
  d \vdash ( ( a \dashv b ) \dashv c )
  +
  d \vdash ( ( b \vdash a ) \dashv c )
  +
  d \vdash ( c \vdash ( a \dashv b ) )
  -
  d \vdash ( c \vdash ( b \vdash a ) ),
  \\
  E_4\colon 
  &
  ( a ( b c ) ) d
  =
  \langle \langle a, \langle b, c \rangle \rangle, d \rangle
  \longmapsto
  \\
  &
  -
  ( bc \vdash a ) \dashv d
  +
  ( cb \vdash a ) \dashv d
  +
  ( a \dashv bc ) \dashv d
  -
  ( a \dashv cb ) \dashv d
  \\
  &
  +
  d \vdash ( bc \vdash a )
  -
  d \vdash ( cb \vdash a )
  -
  d \vdash ( a \dashv bc )
  +
  d \vdash ( a \dashv cb ),
  \\
  E_4\colon
  &
  ( a b ) ( c d )
  =
  \langle \langle a, b \rangle, \langle c, d \rangle \rangle
  \longmapsto
  \\
  &
  ( a \dashv b ) \dashv cd
  -
  ( a \dashv b ) \dashv dc
  -
  ( b \vdash a ) \dashv cd
  +
  ( b \vdash a ) \dashv dc
  \\
  &
  -
  cd \vdash ( a \dashv b )
  +
  cd \vdash ( b \vdash a )
  +
  dc \vdash ( a \dashv b )
  -
  dc \vdash ( b \vdash a ),
  \\
  E_4\colon
  &  
  a ( ( b c ) d )
  =
  \langle a, \langle \langle b, c \rangle, d \rangle \rangle
  \longmapsto
  \\
  &
  -
  ( ( b c ) d ) \vdash a
  +
  ( ( c b  ) d ) \vdash a
  +
  ( d ( b c ) ) \vdash a
  -
  ( d ( c b ) ) \vdash a
  \\
  &
  +
  a \dashv ( ( b c ) d )
  -
  a \dashv ( ( c b ) d )
  -
  a \dashv ( d ( b c ) )
  +
  a \dashv ( d ( c b ) ).
  \end{alignat*}
  \smallskip
  \caption{Equations defining the expansion map in degree 4}
  \label{E4results}
  \end{table}
 
Let $O$ denote the $720 \times 60$ zero matrix and let $I$ denote the $60 \times 60$ identity matrix. 
We combine the matrices $A$, $E$, $O$, $I$ into a $780 \times 540$ matrix $M$ as in equation \eqref{blockmatrix}.
Any row of $\mathrm{RCF}(M)$ which has 
its leading 1 to the right of column 480 represents a polynomial identity in $FRA_4$ satisfied by the 
dicommutator in every alternative dialgebra.  There are 20 such rows; the first represents the di-Malcev
identity.  Further computations show that all of these identities are linear combinations of permutations 
of the di-Malcev identity: we create a $24 \times 60$ matrix in which row $i$ contains the coefficient 
vector of the identity obtained by applying permutation $i$ to the di-Malcev identity and straightening 
the terms using right anticommutativity, and find that this matrix has rank 20. 
We did these computations using rational arithmetic with the Maple package \texttt{LinearAlgebra}.
\end{proof}

%%%%%%%%%%%%%%%%%%%%%%%%%%%%%%%%%%%%%%%%%%%%%%%%%%%%%%%%%%%%%%%%%%%%%%%%

\section{Special identities for Malcev dialgebras}

A special identity (s-identity) for Malcev dialgebras is a polynomial identity 
which is satisfied by the dicommutator in every alternative dialgebra, 
but which is not a consequence of right anticommutativity and the di-Malcev identity.
In this section we describe a computational search for such identities using the representation
theory of the symmetric group; we show that there are no s-identities in degrees 5 or 6, 
so any s-identity must have degree at least 7.  

We write $R_n$ for the number of right anticommutative (RAC) association types in degree $n$.
This equals the number of right commutative association types,
for which we refer to Bremner and Peresi \cite{BP}; see also Sloane \cite{Sloane}, sequence A085748.
We write $Z_n$ for the number of 0-dialgebra association types (Lemma \ref{dialgebratypeslemma}).
Given an RAC association type in degree $n$, we apply it to the identity permutation of the variables, 
$a_1 \cdots a_n$.
We expand this monomial by interpreting each product as the dicommutator,
and obtain a linear combination of $2^{n-1}$ multilinear monomials of degree $n$ in the free 0-dialgebra.

There are three multilinear identities \eqref{ADidentities} in the definition of alternative dialgebra.
Given a multilinear dialgebra identity $I( a_1, \dots, a_n )$ of degree $n$,
we obtain $2(n+2)$ consequences of degree $n+1$: we introduce another variable $a_{n+1}$ and
consider the $2n$ substitutions obtained by replacing $a_i$ by either $a_i \dashv a_{n+1}$ or $a_i \vdash a_{n+1}$;
we also consider the four products 
  \[
  I( \,\cdots\, ) \dashv a_{n+1}, \qquad
  I( \,\cdots\, ) \vdash a_{n+1}, \qquad
  a_{n+1} \dashv I( \,\cdots\, ), \qquad
  a_{n+1} \vdash I( \,\cdots\, ).
  \]
This procedure gives $A_n = 2^{n-6} (n+1)!$ ($n \ge 6$) multilinear identities in degree $n$ 
which generate the $S_n$-module of multilinear identities for alternative dialgebras.

Let $\lambda$ be a partition of $n$ corresponding to an irreducible representation 
with dimension $d = d_\lambda$ of the symmetric group $S_n$.
Consider a matrix with $(A_n + R_n) d$ rows and $(Z_n + R_n) d$ columns,
regarded as a matrix of size $(A_n + R_n) \times (Z_n + R_n)$ in which each entry is a $d \times d$ block;
see Figure \ref{expansionmatrix}.
In the upper left part, the $d \times d$ block in position $(i,j)$ contains the representation matrix
of the terms in the $i$-th alternative dialgebra identity which have 0-dialgebra association type $j$.
In the lower left part, the $d \times d$ block in position $(i,j)$ contains the representation matrix
of the terms in the expansion of the $i$-th RAC association type which have 0-dialgebra association type $j$.
The lower right part contains the identity matrix, representing the RAC association types.

  \begin{figure}
  \[
  \left[
  \begin{array}{c|c}
  \begin{tabular}{c}
  representation matrices \\
  for alternative dialgebra \\
  identities $(A_n d \times Z_n d)$
  \end{tabular}
  &
  \begin{tabular}{c}
  zero matrix \\
  $(A_n d \times R_n d)$
  \end{tabular}
  \\
  \midrule
  \begin{tabular}{c}
  representation matrices \\
  for expansions of RAC \\
  monomials $(R_n d \times Z_n d)$  
  \end{tabular}
  &
  \begin{tabular}{c}
  identity matrix \\
  $(R_n d \times R_n d)$
  \end{tabular}
  \end{array}
  \right]
  \]
  \caption{Expansion matrix for dicommutator identities in degree $n$}
  \label{expansionmatrix}
  \end{figure} 

For each partition $\lambda$, we compute the row canonical form of this matrix, 
and identify any rows which have leading 1s to the right of the vertical line.
These rows represent linear dependence relations among the expansions of the RAC association types 
resulting from the alternative dialgebra identities; in other words, these rows represent identities
satisfied by the dicommutator in every alternative dialgebra. 
The number of these rows will be called the multiplicity of dicommutator identities for partition $\lambda$; 
see Table \ref{sidentityresults} for computational results for degrees $3 \le n \le 6$.

  \begin{table}
  \begin{center}
  \begin{tabular}{lccccccccccc}
  $n = 3$ \\
  partition $\lambda$ & 3 & 21 & $1^3$ \\
  multiplicity & 1 & 1 & 0 \\
  \midrule
  $n = 4$ \\
  partition $\lambda$ & 4 & 31 & $2^2$ & $21^2$ & $1^4$ \\
  multiplicity & 3 & 8 & 5 & 6 & 1 \\
  \midrule
  $n = 5$ \\
  partition $\lambda$ & 5 & 41 & 32 & $31^2$ & $2^21$ & $21^3$ & $1^5$ \\
  multiplicity & 8 & 31 & 38 & 43 & 35 & 25 & 5 \\
  \midrule
  $n = 6$ \\
  partition $\lambda$ & 6 & 51 & 42 & $41^2$ & $3^2$ & 321 & $31^3$ & $2^3$ & $2^21^2$ & $21^4$ & $1^6$ \\
  multiplicity & 19 & 94 & 169 & 185 & 94 & 294 & 179 & 90 & 159 & 84 & 15
  \end{tabular}
  \end{center}
  \bigskip
  \caption{Multiplicities of representations for degrees 3, 4, 5, 6}
  \label{sidentityresults}
  \end{table}
  
We perform a second computation to determine which of the dicommutator identities are
consequences of the defining identities for Malcev dialgebras.
We work in the free RAC algebra so that we only need to consider the 
consequences of the di-Malcev identity.  
If $I(a_1,\dots,a_n)$ is a multilinear nonassociative 
algebra identity in degree $n$, then we have $n+2$ consequences in degree $n+1$, obtained by
$n$ substitutions and two multiplications.
The di-Malcev identity in degree 4 therefore has 6 consequences in degree 5 and 42 consequences
in degree 6; in general we call this number $D_n$.
We also consider the skew-symmetries of the RAC association types; for an example see 
equation \eqref{skewsymmetries}.
In degrees 3, 4, 5, 6 the number of such skew-symmetries is 1, 3, 10, 28 respectively; 
in general we call this number $W_n$.

For each partition $\lambda$ we construct a matrix of size $(W_n + D_n) d \times R_n d$ 
consisting of an upper part with $W_n$ rows and $R_n$ columns of $d \times d$ blocks, 
and a lower part with $D_n$ rows and $R_n$ columns of $d \times d$ blocks.
The upper part contains the representation matrices for the skew-symmetries, and the lower part 
contains the representation matrices for the consequences of the di-Malcev identity.
We compute the row canonical form and find that in every case its rank is equal to
the multiplicity of the dicommutator identities from Table \ref{sidentityresults}.
It follows that every identity of degree less than or equal to 6, satisfied by the dicommutator in 
every alternative dialgebra, is a consequence of right anticommutativity and the di-Malcev identity.

For further information about the application of the representation theory of the symmetric group 
to polynomial identities for nonassociative algebras, see Bremner and Peresi \cite{BP},
especially Section 5.

%%%%%%%%%%%%%%%%%%%%%%%%%%%%%%%%%%%%%%%%%%%%%%%%%%%%%%%%%%%%%%%%%%%%%%%%

\section{Malcev dialgebras with one or two generators} 

It is well-known that every Malcev algebra on two generators is a Lie algebra.
For dialgebras, the corresponding question is whether every two-generated Malcev dialgebra is a Leibniz algebra.
In this section we give a negative answer.

We first consider algebras on one generator.
A basis of the free right anticommutative algebra on one generator $a$ 
consists of the elements $a^n$ for $n \ge 1$ defined by $a^1 = a$ and $a^{n+1} = a^n a$; 
multiplication is determined by the equations
  \begin{equation}
  \label{structure}
  a^n a = a^{n+1},
  \qquad
  a^n a^m = 0 \; (m \ge 2).
  \end{equation}
This structure is isomorphic to the free Leibniz algebra on one generator,
since Loday and Pirashvili \cite{LodayP} have shown that the free Leibniz algebra on a set $X$ 
is linearly isomorphic to the free associative algebra on $X$.
Clearly the structure \eqref{structure} is also the free Malcev dialgebra on one generator; 
it follows that every Malcev dialgebra with one generator is a Leibniz algebra.   

Since the di-Malcev identity has degree 4, in degrees 1, 2, 3 the free Malcev dialgebra
on a set $X$ is linearly isomorphic to the free right anticommutative algebra on $X$.
For two generators $a, b$ the following 10 monomials form a basis 
of the homogeneous subspace of degree 3 in the free right anticommutative algebra:
  \[
  (aa)a, \quad    
  (aa)b, \quad   
  (ab)a, \quad   
  (ab)b, \quad   
  (ba)a, \quad   
  (ba)b, \quad   
  (bb)a, \quad   
  (bb)b, \quad   
  a(ab), \quad   
  b(ab).
  \]  
By the theorem of Loday and Pirashvili, the homogeneous subspace
of degree 3 in the free Leibniz algebra on two generators has dimension 8.
It follows that the free Malcev dialgebra with two generators is not a Leibniz algebra.   

%%%%%%%%%%%%%%%%%%%%%%%%%%%%%%%%%%%%%%%%%%%%%%%%%%%%%%%%%%%%%%%%%%%%%%%%

\section{Leibniz triple systems from Malcev dialgebras} 

Loos \cite{Loos} introduced the following trilinear operation in any Malcev algebra,
  \[
  [a,b,c] = 2 (ab)c - (bc)a - (ca)b,
  \]
and proved that it satisfies the defining identities for Lie triple systems.
In this section we extend this result to the setting of dialgebras: we consider the following 
trilinear operation on any Malcev dialgebra,
  \begin{equation} \label{op}
  \langle a,b,c \rangle = 2(ab)c + a(bc) + (ac)b,
  \tag{LTP}
  \end{equation}
and prove that it satisfies the defining identities for Leibniz triple systems.
Thus any subspace of a Malcev dialgebra which is closed under this operation 
provides an example of a Leibniz triple system.

\begin{definition} \label{definitionLTS} (Bremner and S\'anchez-Ortega \cite{BSO})
A \textbf{Leibniz triple system} is a vector space with a trilinear operation $\langle -,-,- \rangle$
satisfying these identities:
  \allowdisplaybreaks
  \begin{align*}
  \langle a, \langle b, c, d \rangle, e \rangle 
  &\equiv 
  \langle \langle a, b, c \rangle, d, e \rangle 
  - 
  \langle \langle a, c, b \rangle, d, e \rangle 
  - 
  \langle \langle a, d, b \rangle, c, e \rangle 
  + 
  \langle \langle a, d, c \rangle, b, e \rangle,
  \\ 
  \langle a, b, \langle c, d, e \rangle \rangle 
  & 
  \equiv 
  \langle \langle a, b, c \rangle, d, e \rangle 
  - 
  \langle \langle a, b, d \rangle, c, e \rangle 
  - 
  \langle \langle a, b, e \rangle, c, d \rangle 
  + 
  \langle \langle a, b, e \rangle, d, c \rangle.
  \end{align*}
It follows that any monomial in the second or third association type
is equal to a linear combination of monomials in the first association type.  
The patterns of signs and permutations on the right sides of these identities correspond
to the expansions of the Lie triple products $[[b,c],d]$ and $[[c,d],e]$.
\end{definition}

\begin{lemma} \label{degree3-Malcev dialgebra}
In a Malcev dialgebra with product $ab$, the trilinear operation \eqref{op} 
does not satisfy any polynomial identity in degree $3$.
\end{lemma}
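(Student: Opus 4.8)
The plan is to prove that the six multilinear monomials $\langle a_{\sigma(1)},a_{\sigma(2)},a_{\sigma(3)}\rangle$, $\sigma\in S_3$, obtained from \eqref{op} are linearly independent in the free Malcev dialgebra on the set $\{a,b,c\}$; in characteristic $\ne 2,3$ this is equivalent to the statement of the lemma, because by multilinearization every identity of degree $3$ is a consequence of its multilinear components. First I would invoke the observation made above that, since the di-Malcev identity has degree $4$, the multilinear degree-$3$ component of the free Malcev dialgebra on $\{a,b,c\}$ agrees with that of the free right anticommutative algebra, and hence has the ordered basis
\[
(ab)c,\; (ac)b,\; (ba)c,\; (bc)a,\; (ca)b,\; (cb)a,\; a(bc),\; b(ac),\; c(ab),
\]
where the last three monomials are chosen as representatives modulo $x(yz)\equiv -x(zy)$.

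Next I would expand each $\langle a_{\sigma(1)},a_{\sigma(2)},a_{\sigma(3)}\rangle$ using \eqref{op} and straighten the type-$2$ terms by right anticommutativity; for instance $\langle a,b,c\rangle = 2(ab)c+(ac)b+a(bc)$ and $\langle a,c,b\rangle = (ab)c+2(ac)b-a(bc)$. Collecting the resulting coefficient vectors into a $6\times 9$ matrix, one sees that it is block diagonal: every monomial appearing in the expansion of $\langle x,y,z\rangle$ or of $\langle x,z,y\rangle$ has $x$ as its leftmost letter, so these two rows are supported on the three columns $(xy)z,\ (xz)y,\ x(yz)$ only. Each of the three diagonal blocks is the matrix $\bigl(\begin{smallmatrix}2&1&1\\1&2&-1\end{smallmatrix}\bigr)$, all of whose $2\times 2$ minors equal $\pm 3$; hence each block has rank $2$ whenever $\mathrm{char}\ne 3$, the full matrix has rank $6$, and the six trilinear monomials are linearly independent. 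Consequently no nonzero polynomial identity of degree $3$ is satisfied by \eqref{op}.

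There is no real obstacle: the content is the routine expansion and straightening together with the rank computation of a tiny matrix, the block structure being forced by the position of the leftmost variable. The only point worth emphasizing is that the restriction on the characteristic cannot be dropped here, since in characteristic $3$ one has $\langle a,b,c\rangle+\langle a,c,b\rangle = 3\bigl((ab)c+(ac)b\bigr)\equiv 0$, so \eqref{op} does satisfy a nonzero identity of degree $3$.
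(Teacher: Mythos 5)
Your proof is correct, and it is at bottom the same verification as the paper's: expand the six multilinear triple products via \eqref{op} and check that the expansions are linearly independent modulo right anticommutativity (which, as you rightly note, is all that constrains degree $3$ in the free Malcev dialgebra, since the di-Malcev identity has degree $4$). The difference is one of execution. The paper follows its uniform computational template: it keeps all $12$ nonassociative monomials, adjoins the six permutations of $a(bc)+a(cb)$ and an identity block, and lets Maple compute the row canonical form of a $12\times 18$ matrix, concluding from the absence of leading $1$s in the right-hand part. You instead pass to the $9$-dimensional quotient (the free right anticommutative algebra), straighten by hand, and observe that the $6\times 9$ expansion matrix splits into three $2\times 3$ blocks $\bigl(\begin{smallmatrix}2&1&1\\1&2&-1\end{smallmatrix}\bigr)$ indexed by the leftmost variable, each of rank $2$ since all its $2\times 2$ minors are $\pm 3$. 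This buys a fully hand-checkable argument and makes the characteristic issue explicit: your remark that $\langle a,b,c\rangle+\langle a,c,b\rangle=3\bigl((ab)c+(ac)b\bigr)$ shows the lemma genuinely needs characteristic $\ne 3$, a restriction the paper leaves implicit (it is visible only in the denominators $\pm\tfrac13,\pm\tfrac23$ of the row canonical form in Figure \ref{degree3matrixrcf}). The paper's augmented-matrix formulation, on the other hand, is the one that scales directly to the degree-$5$ computation of Theorem \ref{degree5-Malcev dialgebra}. Your appeal to multilinearization in characteristic $\ne 2,3$ to cover non-multilinear degree-$3$ identities is also fine and matches the paper's implicit scope.
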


\begin{proof}
We construct an $12 \times 18$ matrix in which
the upper left $6 \times 12$ block contains the right anticommutative identities,
the lower left $6 \times 12$ block contains the expansions of the trilinear monomials,
the upper right $6 \times 6$ block contains the zero matrix,
and the lower right $6 \times 6$ block contains the identity matrix.
Columns 1--12 correspond to the 12 multilinear monomials of degree 3
in the free nonassociative algebra, 
and columns 13--18 correspond to the 6 trilinear monomials of degree 3 in the operation
\eqref{op}:
  \[
  \langle a,b,c \rangle, \quad 
  \langle a,c,b \rangle, \quad 
  \langle b,a,c \rangle, \quad 
  \langle b,c,a \rangle, \quad 
  \langle c,a,b \rangle, \quad 
  \langle c,b,a \rangle.
  \]
There are six permutations of the right anticommutative identity $a(bc) + a(cb)$;
the $(i,j)$ entry of the upper left block is the coefficient of the $j$-th nonassociative monomial
in the $i$-th permutation.
The $(6{+}i,j)$ entry of the lower left block is the coefficient of the $j$-th nonassociative monomial
in the expansion of the $i$-th trilinear monomial.
This matrix and its row canonical form are displayed in Figures \ref{degree3matrix} and \ref{degree3matrixrcf};
we have omitted the zero rows of the RCF. 
Since there is no row in the RCF which has its leading 1 in the right part of the matrix, 
there are no dependence relations among the expansions of the trilinear monomials
which hold as a result of the right anticommutative identities.
\end{proof}

  \begin{figure}
  \[
  \left[
  \begin{array}{rrrrrrrrrrrr|rrrrrr}
  . & . & . & . & . & . & 1 & 1 & . & . & . & . & . & . & . & . & . & .  \\
  . & . & . & . & . & . & 1 & 1 & . & . & . & . & . & . & . & . & . & .  \\
  . & . & . & . & . & . & . & . & 1 & 1 & . & . & . & . & . & . & . & .  \\
  . & . & . & . & . & . & . & . & 1 & 1 & . & . & . & . & . & . & . & .  \\
  . & . & . & . & . & . & . & . & . & . & 1 & 1 & . & . & . & . & . & .  \\
  . & . & . & . & . & . & . & . & . & . & 1 & 1 & . & . & . & . & . & .  \\ 
  \midrule
  2 & 1 & . & . & . & . & 1 & . & . & . & . & . & 1 & . & . & . & . & .  \\
  1 & 2 & . & . & . & . & . & 1 & . & . & . & . & . & 1 & . & . & . & .  \\
  . & . & 2 & 1 & . & . & . & . & 1 & . & . & . & . & . & 1 & . & . & .  \\
  . & . & 1 & 2 & . & . & . & . & . & 1 & . & . & . & . & . & 1 & . & .  \\
  . & . & . & . & 2 & 1 & . & . & . & . & 1 & . & . & . & . & . & 1 & .  \\
  . & . & . & . & 1 & 2 & . & . & . & . & . & 1 & . & . & . & . & . & 1  
  \end{array}
  \right]
  \]
  \caption{The $12 \times 18$ matrix from the proof of Lemma \ref{degree3-Malcev dialgebra}}
  \label{degree3matrix}
  \[
  \left[
  \begin{array}{rrrrrrrrrrrr|rrrrrr}
  1 & . & . & . & . & . & . & -1 & . &  . & . &  . &  \frac{2}{3} & -\frac{1}{3} & . & . & . & .  \\[3pt]
  . & 1 & . & . & . & . & . &  1 & . &  . & . &  . & -\frac{1}{3} &  \frac{2}{3} & . & . & . & .  \\[3pt]
  . & . & 1 & . & . & . & . &  . & . & -1 & . &  . & . & . &  \frac{2}{3} & -\frac{1}{3} & . & .  \\[3pt]
  . & . & . & 1 & . & . & . &  . & . &  1 & . &  . & . & . & -\frac{1}{3} &  \frac{2}{3} & . & .  \\[3pt]
  . & . & . & . & 1 & . & . &  . & . &  . & . & -1 & . & . & . & . &  \frac{2}{3} & -\frac{1}{3}  \\[3pt]
  . & . & . & . & . & 1 & . &  . & . &  . & . &  1 & . & . & . & . & -\frac{1}{3} &  \frac{2}{3}  \\
  . & . & . & . & . & . & 1 & 1 & . & . & . & . & . & . & . & . & . & .  \\
  . & . & . & . & . & . & . & . & 1 & 1 & . & . & . & . & . & . & . & .  \\
  . & . & . & . & . & . & . & . & . & . & 1 & 1 & . & . & . & . & . & . 
  \end{array}
  \right]
  \]
  \caption{The row canonical form of the matrix of Figure \ref{degree3matrix}}
  \label{degree3matrixrcf}
  \end{figure}
  
\begin{theorem} \label{degree5-Malcev dialgebra}
In a Malcev dialgebra with operation $ab$, every polynomial identity of degree 5 satisfied by 
the trilinear operation \eqref{op} is a consequence of the defining identities for Leibniz
triple systems (Definition \ref{definitionLTS}).
\end{theorem}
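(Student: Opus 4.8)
The plan is to adapt the computational scheme of the proof of Theorem~\ref{degree4proposition}, now with the trilinear operation \eqref{op} in place of the dicommutator and the free Malcev dialgebra in place of the free alternative dialgebra. In degree $5$ the three relevant objects are: the $360$-dimensional space spanned by the $5!$ permutations of the three triple-monomial association types $\langle\langle a,b,c\rangle,d,e\rangle$, $\langle a,\langle b,c,d\rangle,e\rangle$ and $\langle a,b,\langle c,d,e\rangle\rangle$; the multilinear degree-$5$ subspace $FRA_5$ of the free right anticommutative algebra on five generators, equipped with the explicit basis obtained from the RAC association types of degree $5$ modulo their skew-symmetries, exactly as was done for degree $4$ in Table~\ref{FRA4basis}; and the subspace $V\subseteq FRA_5$ spanned by all $5!$ permutations of the six degree-$5$ consequences of the di-Malcev identity (substitute a product into each of the four arguments of the degree-$4$ identity, and multiply that identity on the left and on the right by a new variable). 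Then the multilinear degree-$5$ part of the free Malcev dialgebra is $FRA_5/V$: right anticommutativity is built into the chosen basis, and only $V$ need be carried explicitly.

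First I would compute the expansion map $E_5$ from the $360$-dimensional triple-monomial space into $FRA_5$, obtained by substituting the definition \eqref{op} into each of the two occurrences of the operation and straightening the resulting degree-$5$ binary monomial with right anticommutativity; since this map is equivariant for the action of $S_5$, it is enough to record it on one representative of each of the three association types. I would then assemble the block matrix in the layout of \eqref{blockmatrix}: a spanning set of $V$ in the upper-left block, the matrix of $E_5$ in the lower-left block, a zero block in the upper right, and the $360\times360$ identity matrix in the lower right, with columns indexed first by the basis of $FRA_5$ and then by the $360$ triple monomials. Every row of its row canonical form whose leading $1$ lies among the last $360$ columns records a linear combination $w$ of triple monomials with $E_5(w)\in V$, that is, a degree-$5$ identity satisfied by \eqref{op} in every Malcev dialgebra; these rows span the space $\mathcal{I}$ of all such identities.

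It then remains to show $\mathcal{I}$ lies in the span $\mathcal{L}$ of the $S_5$-orbit of the two axioms of Definition~\ref{definitionLTS}, which are the only degree-$5$ consequences of the Leibniz-triple-system axioms (those axioms are the entire defining set and are themselves of degree $5$; there are no composed trilinear expressions in degree $4$, and Definition~\ref{definitionLTS} imposes nothing in degree $3$, compare Lemma~\ref{degree3-Malcev dialgebra}). Each axiom rewrites a monomial of the second, respectively third, association type as a linear combination of monomials of the first, and the three types are disjoint, so repeated use of the axioms gives a well-defined and $S_5$-equivariant reduction of the $360$-dimensional space onto the $120$-dimensional span of the first-type monomials; since this reduction agrees with each axiom on one representative, its kernel is precisely $\mathcal{L}$. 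I would therefore reduce each basis vector of $\mathcal{I}$ and confirm that it becomes $0$, which gives $\mathcal{I}\subseteq\mathcal{L}$ and hence the theorem; since \eqref{op} does satisfy the two axioms — a short direct expansion, which this computation also confirms — in fact $\mathcal{I}=\mathcal{L}$. As in the proofs of Proposition~\ref{degree3proposition} and Theorem~\ref{degree4proposition}, the denominators occurring in the row canonical forms pin down the admissible characteristics, which include at least every field of characteristic $0$ or $p>3$.

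The conceptual content is slight, and the main obstacle — the step I expect to be most delicate to get exactly right — is the size and bookkeeping of the computation: enumerating the RAC association types of degree $5$ together with the skew-symmetry identifications among their multilinear monomials, implementing the double substitution of \eqref{op} followed by right-anticommutative straightening, and listing correctly the degree-$5$ consequences of the di-Malcev identity. The matrices have on the order of a thousand rows and several hundred columns, within reach of exact rational linear algebra; if they prove unwieldy, one can process one irreducible $S_5$-module at a time, replacing every monomial by its representation matrix and comparing, partition by partition $\lambda\vdash 5$, the multiplicity of the identities satisfied by \eqref{op} with the multiplicity of $\mathcal{L}$, exactly as in the preceding section.
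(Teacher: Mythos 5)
Your proposal is correct and follows essentially the same strategy as the paper's proof: build the degree-5 block expansion matrix in the layout of \eqref{blockmatrix}, extract from its row canonical form the space of identities satisfied by \eqref{op} in every Malcev dialgebra, and verify that this space coincides with the span of the $S_5$-orbit of the two identities of Definition \ref{definitionLTS}. The only differences are implementation details — you work in $FRA_5$ with right anticommutativity built into the basis and carry only the di-Malcev consequences, and you check the inclusion by reducing each identity modulo the Leibniz-triple-system axioms, whereas the paper works with all 14 binary association types, includes the 30 right-anticommutative consequences as explicit rows, and instead extracts generators of the 240-dimensional identity space and matches them (up to permutation) with the axioms.
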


\begin{proof}
The strategy is the same as in the proof of Lemma \ref{degree3-Malcev dialgebra}, 
but the matrix is much larger and some further computations are required.
There are 14 association types for a nonassociative binary operation of degree 5:
  \begin{alignat*}{5}
  &(((ab)c)d)e, &\quad 
  &((a(bc))d)e, &\quad 
  &((ab)(cd))e, &\quad 
  &(a((bc)d))e, &\quad 
  &(a(b(cd)))e, 
  \\
  &((ab)c)(de), &\quad 
  &(a(bc))(de), &\quad
  &(ab)((cd)e), &\quad 
  &(ab)(c(de)), &\quad 
  &a(((bc)d)e), 
  \\
  &a((b(cd))e), &\quad 
  &a((bc)(de)), &\quad 
  &a(b((cd)e)), &\quad 
  &a(b(c(de))).
  \end{alignat*}
Each type admits $5!$ permutations of the variables, giving 1680 multilinear monomials
which correspond to the columns in the left part of the matrix;
we order these monomials first by association type and then lexicographically.
We need to generate the consequences of degree 5 of the defining identities for Malcev dialgebras.
A multilinear identity $I(a_1,\dots,a_n)$ of degree $n$ produces $n{+}2$ identities of degree $n{+}1$,
using $n$ substitutions and two multiplications:
  \[
  I( a_1 a_{n+1}, \dots, a_n ),
  \;
  \dots,
  \;
  I( a_1, \dots, a_n a_{n+1} ),
  \;
  I( a_1, \dots, a_n ) a_{n+1},
  \;
  a_{n+1} I( a_1, \dots, a_n ).
  \]
The right anticommutative identity of degree 3 produces 5 identities of degree 4, 
and each of these produces 6 identities of degree 5, for a total of 30. 
The di-Malcev identity produces 6 identities of degree 5. 
Altogether we have 36 identities of degree 5, and each admits $5!$ permutations, giving 4320 identities.
Hence the upper left block of the matrix $E$ in degree 5 has size $4320 \times 1680$; 
its $(i,j)$ entry is the coefficient of the $j$-th nonassociative monomial in the $i$-th multilinear identity.
There are 3 association types for a trilinear operation in degree 5:
  \[
  \langle \langle a, b, c \rangle, d, e \rangle \qquad
  \langle a, \langle b, c, d \rangle, e \rangle \qquad
  \langle a, b, \langle c, d, e \rangle \rangle.
  \]
Each type admits $5!$ permutations of the variables, giving 360 ternary monomials,
corresponding to the columns in the right part of the matrix.
The lower left block has size $360 \times 1680$; 
its $(i,j)$ entry is the coefficient of the $j$-th nonassociative monomial in the expansion, 
using equation \eqref{op}, of the $i$-th ternary monomial.
The upper right block is the $4320 \times 360$ zero matrix,
and the lower right block is the $360 \times 360$ identity matrix;
see Figure \ref{figureMDmatrix}.

  \begin{figure}
  \[
  \left[
  \begin{array}{c|c}
  \begin{tabular}{c}
  consequences in degree 5 \\
  of the Malcev dialgebra \\
  identities of Definition \ref{MDdefinition}
  \end{tabular}
  &
  \begin{tabular}{c}
  zero matrix
  \end{tabular}
  \\
  \midrule
  \begin{tabular}{c}
  expansion using operation \\
  \eqref{op} of the ternary \\
  monomials of degree 5
  \end{tabular}
  &
  \begin{tabular}{c}
  identity matrix
  \end{tabular}
  \end{array}
  \right]
  \]
  \caption{The $4680 \times 2040$ matrix from the proof of Theorem \ref{degree5-Malcev dialgebra}}
  \label{figureMDmatrix}
  \end{figure} 

We compute the row canonical form of this matrix and find that its rank is 1820.
We ignore the first 1580 rows since their leading 1s are in the left part, and 
retain only the last 240 rows which have leading 1s in the right part.
We sort these rows by increasing number of nonzero components. 
We construct another matrix with a $360 \times 360$ upper block and 
a $120 \times 360$ lower block.
For each of the identities corresponding to the last 240 rows,
we apply all $5!$ permutations of the variables, store the permuted identities in the lower block,
and compute the row canonical form; 
after each iteration, the lower block is the zero matrix.
We record the index numbers of the identities which increase the rank:
  \begin{center}
  \begin{tabular}{lrrrrrrrr}
  identity &\quad 1 &\quad 41 &\quad 71 &\quad 111 &\quad 141 &\quad 143 \\
  rank &\quad 60 &\quad 140 &\quad 160 &\quad 160 &\quad 200 &\quad 240
  \end{tabular}
  \end{center}
Further computations show that identities 1, 41, 71 and 111 
are consequences of identities 141 and 143: 
thus these two identities generate the entire 240-dimensional space. 
Identities 141 and 143 coincide, up to a permutation of the variables, 
to the identities of Definition \ref{definitionLTS}.
We used the Maple package \texttt{LinearAlgebra[Modular]}
with $p = 101$ for these computations.
\end{proof}

\begin{corollary}
Every subspace of a Malcev dialgebra closed under the trilinear operation 
$\langle a, b, c \rangle = 2(ab)c + a(bc) + (ac)b$ is a Leibniz triple system.
\end{corollary}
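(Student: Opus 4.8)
The plan is to obtain this as an immediate consequence of Theorem \ref{degree5-Malcev dialgebra}. Recall that a Leibniz triple system is defined by exactly two polynomial identities, both homogeneous of degree $5$ (Definition \ref{definitionLTS}); no identities of lower degree are imposed, and by Lemma \ref{degree3-Malcev dialgebra} none are available in degree $3$ anyway. Hence, to show that $(V,\langle-,-,-\rangle)$ is a Leibniz triple system for an arbitrary Malcev dialgebra $V$ with the operation \eqref{op}, it suffices to verify that the two identities of Definition \ref{definitionLTS} hold identically when $\langle a,b,c\rangle = 2(ab)c + a(bc) + (ac)b$ and the product $ab$ satisfies right anticommutativity and the di-Malcev identity of Definition \ref{MDdefinition}.

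First I would extract exactly this fact from the proof of Theorem \ref{degree5-Malcev dialgebra}. That proof produces a $240$-dimensional space of degree-$5$ polynomial identities satisfied by the operation \eqref{op} in every Malcev dialgebra, and exhibits two generating identities (numbered $141$ and $143$ in the computation) which coincide, after a relabelling of the variables, with the two defining identities of Definition \ref{definitionLTS}. Since a polynomial identity that holds under all substitutions continues to hold after an arbitrary permutation of its arguments, the two identities of Definition \ref{definitionLTS} are themselves satisfied by \eqref{op} in every Malcev dialgebra. Equivalently, one could expand \eqref{op} directly and reduce both sides of each Leibniz triple system identity using right anticommutativity and the di-Malcev identity; this is precisely the content of the matrix computation underlying the theorem.

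It then remains only to pass to subspaces, which is routine. If $S \subseteq V$ is a subspace closed under \eqref{op}, then \eqref{op} restricts to a trilinear operation on $S$, and the two identities of Definition \ref{definitionLTS}, being universally quantified multilinear identities valid on all of $V$, in particular hold for all triples of elements of $S$. Therefore $S$, with the restricted operation, satisfies the defining identities of a Leibniz triple system, which is the assertion of the corollary. I do not anticipate a genuine obstacle here: the computational heart of the argument is already contained in Theorem \ref{degree5-Malcev dialgebra}, and the only point requiring care is the logical direction — the classification in that theorem is applied by first noting that the Leibniz triple system identities are themselves among the degree-$5$ identities satisfied by \eqref{op}, not merely that every such identity follows from them.
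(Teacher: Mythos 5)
Your argument is correct and is essentially the paper's (implicit) one: the corollary is stated without a separate proof because the computation in Theorem \ref{degree5-Malcev dialgebra} already shows that the identities of Definition \ref{definitionLTS} (identities 141 and 143, up to a permutation of variables) are satisfied by the operation \eqref{op} in every Malcev dialgebra, and restriction to a closed subspace is immediate. Your remark on the logical direction — that one uses the fact that the Leibniz triple system identities are among the identities satisfied by \eqref{op}, not merely that all such identities follow from them — is exactly the right point of care.
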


%%%%%%%%%%%%%%%%%%%%%%%%%%%%%%%%%%%%%%%%%%%%%%%%%%%%%%%%%%%%%%%%%%%%%%%%

\section*{Acknowledgements}

The authors thank Alexander Pozhidaev for helpful comments,
and Pavel Koles\-nikov for references \cite{Chapoton} and \cite{Vallette}.
Murray Bremner was supported by a Discovery Grant from NSERC;
he thanks the Department of Algebra, Geometry and Topology at 
the University of M\'alaga for its hospitality during his visit in June and July 2011.
Luiz Peresi was supported by CNPq of Brazil.
Juana S\'anchez-Ortega was supported by the Spanish MEC and Fondos FEDER 
jointly through project MTM2010-15223, and by the Junta de Andaluc\'ia (projects FQM-336 and FQM2467).

%%%%%%%%%%%%%%%%%%%%%%%%%%%%%%%%%%%%%%%%%%%%%%%%%%%%%%%%%%%%%%%%%%%%%%%%

\end{document}